\documentclass[a4paper, 12pt]{article}

\usepackage[margin=1in]{geometry}
\usepackage{amsmath,amssymb,amsthm,mathtools}
\usepackage{microtype}
\usepackage{enumitem}
\usepackage[colorlinks=true,linkcolor=blue,citecolor=blue,urlcolor=blue]{hyperref}

\newtheorem{theorem}{Theorem}[section]
\newtheorem{proposition}[theorem]{Proposition}
\newtheorem{lemma}[theorem]{Lemma}
\newtheorem{corollary}[theorem]{Corollary}

\newtheorem{example}[theorem]{Example}
\newtheorem{problem}[theorem]{Problem}

\theoremstyle{definition}
\newtheorem{definition}[theorem]{Definition}

\DeclareMathOperator{\ord}{ord}
\DeclareMathOperator{\supp}{supp}

\title{The equality between the Erd\H{o}s-Ginzburg-Ziv constant and
the short product-one constant for finite nonabelian groups}
\author{
Yongke Qu
\qquad
Guoqing Wang\thanks{Corresponding author. E-mail: gqwang1979@aliyun.com.}
\qquad
Yuanlin Li}
\date{}

\begin{document}

\maketitle

\begin{abstract}

Let $G$ be a finite group, and let $\exp(G)$ denote its
exponent. The Erd\H{o}s-Ginzburg-Ziv constant $s(G)$ is the least
integer forcing a product-one subsequence of length $\exp(G)$, while
the short product-one constant $\eta(G)$ is the least integer forcing
a nonempty product-one subsequence of length at most $\exp(G)$. The
natural nonabelian extension of a conjecture [W. Gao, \emph{On zero-sum subsequences of restricted size II}, Discrete Math. 2003] on the Erd\H{o}s-Ginzburg-Ziv constant in finite abelian groups  predicts that
$s(G)=\eta(G)+\exp(G)-1.$
We confirm this equality for every finite nonabelian group $G$ having
a cyclic subgroup of index $p$, where $p$ is the smallest prime
divisor of $|G|$.  As further consequences, we determine all
generalized Erd\H{o}s-Ginzburg-Ziv constants $s_{m\exp(G)}(G)$ for this family of groups.
\end{abstract}

\noindent\textbf{Keywords.}
Zero-sum; Product-one sequence; Erd\H{o}s-Ginzburg-Ziv constant;
short product-one constant; small Davenport constant; metacyclic group.

\section{Introduction}

The classical theorem of Erd\H{o}s, Ginzburg and Ziv \cite{EGZ}
asserts that every sequence of $2n-1$ elements of the cyclic group
$C_n$ contains an $n$-term zero-sum subsequence. This is one of the
foundational results of zero-sum theory. Its prescribed-length nature
has inspired the systematic study of zero-sum subsequences of
specified length over finite abelian groups; see
\cite{GaoGeroldingersurvey} for a survey.

For a finite group $G$ written multiplicatively, the natural analogue
of a zero-sum subsequence is a \emph{product-one subsequence}: its
terms can be ordered so that their product is the identity. Let
$$\exp(G)=\mathop{\rm lcm}\{\ord(g):g\in G\}$$
be the exponent of $G$. The Erd\H{o}s-Ginzburg-Ziv constant $s(G)$, referred to as the EGZ constant for short,
is the least integer $t$ such that every sequence over $G$ of length
at least $t$ contains a product-one subsequence of length exactly
$\exp(G)$. Its short-sequence counterpart is $\eta(G)$, the least
integer forcing a nonempty product-one subsequence of length at most
$\exp(G)$. A standard extremal construction gives
$s(G)\geq\eta(G)+\exp(G)-1.$
Gao \cite[Conjecture 2.3]{GaoRestrictedII} conjectured that this universal lower bound
is always attained for every finite abelian group $G$, namely,
$$
s(G)=\eta(G)+\exp(G)-1.
$$
For cyclic groups this equality is precisely the Erd\H{o}s-Ginzburg-Ziv
theorem. The conjecture remains open for general finite abelian
groups, but it is confirmed for every group of rank at most two. More
precisely, if $G\cong C_m\oplus C_n$ with $m\mid n$, then
$\eta(G)=2m+n-2$ and
$s(G)=2m+2n-3$ (see \cite[Theorem~6.3]{GaoGeroldingersurvey})
and therefore $s(G)=\eta(G)+\exp(G)-1$. The conjecture is also confirmed
for groups of exponent $2$, $3$, or $4$, and for $C_5^3$; see
\cite[Theorem 6.6]{GaoGeroldingersurvey} and the references therein.

In the nonabelian setting, the order of the selected terms is part of
the product-one condition, which makes the problem substantially more
delicate. Only a limited number of families have been treated.
Oh and Zhong determined $s(G)$ and solved the associated inverse problem for dihedral and dicyclic groups \cite{OhZhong2020}. Zhong also established the corresponding results for the short product-one constant \cite{Zhong2021}. Taken together, these results verify Gao's equality for both families of groups.
Avelar, Brochero
Mart\'{\i}nez and Ribas determined the relevant constants and confirmed
Gao's equality for a broad family of split metacyclic groups
$C_n\rtimes_s C_2$ \cite{AvelarBrocheroRibas2023}. Analogous results
for non-split metacyclic groups were subsequently obtained by Yang,
Zhang and Feng \cite{YangZhangFeng2024} and by Ribas
\cite{Ribas2025}.  Brochero Mart\'{\i}nez, Lemos, Moriya and Ribas
determined the four main product-one constants of
$D_{2n}\times C_2$, thereby verifying the equality for an important
family beyond the metacyclic setting \cite{BLMR}. Closely related prescribed-length product-one problems for a broader
class of metacyclic groups were studied by Han and Zhang \cite{HanZhang2019}. For
$G\cong C_m\ltimes_{\varphi}C_{mn}$, among other results, they determined the constant
$s_{mn\mathbb N}(G)$, and obtained upper bounds for
$s(G)$.

The preceding results concern important individual families, most of
which have a cyclic subgroup of index two. They naturally suggest
seeking a uniform structural theorem that is independent of a
particular presentation and is not restricted to index two. In this paper, we
consider finite nonabelian groups having a cyclic subgroup $H$ of
index $p$, where $p$ is the smallest prime divisor of the group order.
Such a subgroup is automatically normal.
This class contains, in
particular, all dihedral and dicyclic groups, as well as many other
metacyclic groups. Thus the theorem unifies and extends several
previously known cases. Our main result is the following.

\begin{theorem}\label{theorem:main}
Let $G$ be a finite nonabelian group, and let $p$ be the smallest prime
divisor of $|G|$. Suppose that $G$ has a cyclic subgroup $H$ of index $p$.
Then $$s(G)=\eta(G)+\exp(G)-1=\mathsf d(G)+\exp(G).$$ Moreover, the following assertions hold:
\begin{enumerate}[label=\rm(\roman*)]
\item If $\exp(G)=\frac{|G|}{p}$, then
$s(G)=2\frac{|G|}{p}+p-2.$
\item Otherwise, $\exp(G)=|G|$ and
$s(G)=|G|+\frac{|G|}{p}+p-2.$
\end{enumerate}
\end{theorem}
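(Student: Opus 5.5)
The plan is to establish three facts about $G$ and combine them: the small Davenport constant $\mathsf d(G)$, the short constant $\eta(G)$, and the EGZ constant $s(G)$. Throughout, write $H=\langle h\rangle\cong C_n$ with $n=|G|/p$, and fix $y\in G\setminus H$, so that $G=\bigcup_{i=0}^{p-1}y^iH$ and $y^p\in H$.

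\emph{Step 1: the exponent dichotomy.} Because $H$ is cyclic of order $n$, every element of $H$ has order dividing $n$. An element $y^ih$ with $p\nmid i$ has order $p\cdot\ord\bigl((y^ih)^p\bigr)$, and $(y^ih)^p\in H$. Hence $\exp(G)\in\{n,pn\}$. Moreover $\exp(G)=pn=|G|$ exactly when some element of $G$ has order $pn$. In that case $G$ is cyclic, which is excluded. So in the nonabelian setting the exponent equals $|G|$ only through the $\operatorname{lcm}$: this happens when $p\nmid n$, or when the $p$-parts combine to give a $p$-part of exponent larger than that of $n$. I will classify via the action $y h y^{-1}=h^r$ with $r^p\equiv1\pmod n$, and $y^p=h^t$.

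\emph{Step 2: $\mathsf d(G)=n+p-2$.} For the lower bound, take the sequence $h^{[n-1]}\cdot y^{[p-1]}$. A product-one subsequence must use a number of $y$-terms divisible by $p$ modulo $H$, hence uses no $y$-terms, and then it cannot be product-one in $C_n$. For the upper bound, start from a sequence $S$ of length $n+p-1$. Use the projection $G\to G/H\cong C_p$ and repeatedly extract products of at most $p$ terms that land in $H$; since $\mathsf D(C_p)=p$, this is possible while at least $p$ terms remain. This produces at least $\lfloor (n+p-1-(p-1))/p\rfloor$ blocks, but that count alone is insufficient. Instead I will argue as in the dihedral case: the ordered products of these blocks give elements of $H$, and the rearrangement freedom inside a block containing a $y$-term (conjugation acts by $r$) supplies additional choices. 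Since $p$ is the smallest prime, $r$ has multiplicative order $p$ with $\gcd(r-1,\dots)$ controlled, and this should let the resulting subset-products in $C_n$ reach $1$. This upper bound is the most likely place where known results for metacyclic groups with $p$ smallest are needed.

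\emph{Step 3: $\eta(G)=\mathsf d(G)+1$ and $s(G)\le \mathsf d(G)+\exp(G)$.} The bound $\eta\ge \mathsf d+1$ is trivial, and the standard construction gives $s\ge\eta+\exp-1$. So everything reduces to proving $s(G)\le \mathsf d(G)+\exp(G)=n+p-2+\exp(G)$. In case (i), $\exp(G)=n$ and the target is $2n+p-2$. In case (ii) the target is $pn+n+p-2$.

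The main strategy for the upper bound is an EGZ-type block argument. Given $S$ of length $\exp(G)+n+p-2$, project to $G/H$ and use the abelian EGZ theorem $s(C_p)=2p-1$ to partition many terms into blocks of length exactly $p$, each with product in $H$. This requires care with ordering; it is done by choosing an order for each block and noting that elements of $H$ commute among themselves. Then apply the EGZ theorem in $C_n$ (namely $s(C_n)=2n-1$) to the block products, viewed as a sequence in $H$ of length $\ge 2n/p-1$ in the appropriate units. The obstacle is that $n$ need not be divisible by $p$ in case (i). There, I would instead use $H$-terms directly: if $S$ has many terms in $H$, apply EGZ in $C_n$; otherwise pair off cosets. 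In case (ii), the factor $p$ in $\exp(G)=pn$ makes the block method fit: $2n-1$ blocks of length $p$ need $p(2n-1)$ terms, which is more than we have. Therefore I will use the sharper $\eta$-style bound, extracting short product-one blocks of length $\le\exp(G)$ and then lengthening them with the leftover $\mathsf d(G)$ slack. I expect this lengthening step, together with the case (i) analysis when $p\nmid n$, to be the main obstacle. I would attack it by using the conjugation freedom in $y$-terms to adjust products by powers $h^{(r-1)k}$.
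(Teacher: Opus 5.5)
Your proposal has a genuine gap. The easy parts are in place: the lower bound $\mathsf d(G)\ge n+p-2$ via $h^{[n-1]}\boldsymbol{\cdot}y^{[p-1]}$, and the chain $s(G)\ge\eta(G)+\exp(G)-1\ge\mathsf d(G)+\exp(G)$. After that, everything rests on the upper bound $s(G)\le n+p-2+\exp(G)$, and for this you only list strategies whose obstacles you flag but do not overcome.

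\emph{Case (i).} The proposed mechanism fails. Blocks of $p$ terms that are product-one modulo $H$ have ordered products that can be arbitrary elements of $H\cong C_n$. To reach length $n$ you would need $n/p$ of them to multiply to $1$ in $C_n$, and nothing forces that. Your count $2n/p-1$ is the EGZ count for $C_{n/p}$, not for $C_n$; EGZ in $C_n$ gives $n$ blocks, i.e.\ length $pn$. Your worry about $p\nmid n$ is also misplaced: by Cauchy's theorem, $\exp(G)=n$ forces $p\mid n$, and this is exactly the fact the paper exploits.

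Set $q=n/p\ge 2$ and $K=\langle h^p\rangle\cong C_q$. Since $\ord(y)=p\cdot\ord(y^p)$ divides $n$, you get $y^p\in K$. Hence $G/K\cong C_p\oplus C_p$, and $p$-blocks that are product-one modulo $K$ have all their products in $C_q$. There EGZ applied to $2q-1$ blocks gives length exactly $n$.

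Even then the count falls one block short: Reiher's theorem on $C_p\oplus C_p$ only guarantees $2q-2$ blocks. The paper closes this with a rigidity argument your plan does not anticipate:
\begin{enumerate}[label=\rm(\alph*)]
\item the $3p-2$ theorem yields an extra $2p$-block $U$;
\item the inverse EGZ theorem in $C_q$ forces the block products to have shape $\alpha^{[q-1]}\boldsymbol{\cdot}\beta^{[q-1]}$, so every block has a unique product;
\item refactorizing via Lemma~\ref{lemma:pairfactorization} then shows that $\supp(T)$ generates an abelian subgroup $A$ with $|A|\le n$;
\item Gao's abelian theorem in $A$ finishes.
\end{enumerate}

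\emph{Case (ii).} You give no argument at all; ``lengthening short blocks with the leftover $\mathsf d(G)$ slack'' has no mechanism behind it. The relevant observation is that $\exp(G)=|G|$ makes $s(G)=E(G)$. So one needs exactly $E(G)\le\mathsf d(G)+|G|$ for this class of groups, which the paper imports from Qu--Gao--Li, together with $\mathsf d(G)=|G|/p+p-2$ from Qu--Li--Teeuwsen.

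Your Step~2 upper bound on $\mathsf d(G)$ is likewise only gestured at. It becomes unnecessary once $s(G)\le n+p-2+\exp(G)$ is proved directly, since the squeeze then determines $\mathsf d(G)$, $\eta(G)$ and $s(G)$ at once; this is how the paper treats case (i).

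Finally, the assertion in Step~1 that $\exp(G)=pn$ requires an element of order $pn$ is false. $S_3$ lies in this class and has exponent $6$ but no element of order $6$.
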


The theorem confirms the natural nonabelian extension of Gao's
conjecture for this class and gives
the stronger conclusion that the two lower bounds
$\eta(G)\geq\mathsf d(G)+1$
and
$s(G)\geq\eta(G)+\exp(G)-1$
are simultaneously attained.

The significance of $s(G)$ is also reflected in the generalized EGZ
constants. For $m\in\mathbb N$, let $s_{m\exp(G)}(G)$ be the least
integer forcing a product-one subsequence of length $m\exp(G)$. This
family contains both the ordinary EGZ constant and the Gao constant:
$$s_{\exp(G)}(G)=s(G)
\qquad\text{and}\qquad
s_{|G|}(G)=E(G),$$
where the second identity uses $\exp(G)\mid |G|$. It is also linked to
the small Davenport constant $\mathsf d(G)$ through the universal
lower bound
$$s_{m\exp(G)}(G)\geq \mathsf d(G)+m\exp(G).$$
Thus the generalized EGZ constants place $s(G)$, $E(G)$, and
$\mathsf d(G)$ in a single prescribed-length framework.

Combining Theorem \ref{theorem:main} with the general bounds developed
in Section 4, we further determine this entire family for the groups
under consideration. For every $m\geq 1$,
$$s_{m\exp(G)}(G)
 =\mathsf d(G)+m\exp(G)
 =\eta(G)+m\exp(G)-1.$$
Taking $m=|G|/\exp(G)$ recovers
$E(G)=\mathsf d(G)+|G|$.

The paper is organized as follows. Section 2 introduces the notation
and definitions used throughout. In Section 3, we establish the
necessary auxiliary results and prove the main theorem. Section 4
derives the prescribed-length consequences, investigates the
stabilization threshold associated with the generalized EGZ
constants, and formulates the corresponding problems for general
finite groups.

\section{Notation and terminology}

Throughout this paper, groups are finite and written multiplicatively.  The
identity element of a group $G$ is denoted by $1$, and its multiplication is
written as $*$.  If $a,b\in\mathbb R$ with $a\leq b$, then
$[a,b]=\{z\in\mathbb Z:a\leq z\leq b\}.$
For $A\subseteq G$, the subgroup generated by $A$ is denoted by
$\langle A\rangle$.  We write $C_n$ for a cyclic group of order $n$.

We use the standard language of sequences over groups; see
\cite{GG2013} and \cite[Chapter 5]{GHK}.  More precisely, a sequence over
$G$ is an element of the free abelian monoid $\mathcal F(G)$.  It may be
displayed either as
$$\mathop{\bullet}\limits_{g\in G} g^{[\mathsf v_g(S)]}
\qquad\text{or as}\qquad
S=g_1\boldsymbol{\cdot}\ldots\boldsymbol{\cdot}g_\ell,$$
where $\mathsf v_g(S)\in\mathbb N_0$ is the multiplicity of $g$ in $S$
and only finitely many multiplicities are nonzero. If $S_1, S_2 \in \mathcal F(G)$, then $S_1 \boldsymbol{\cdot} S_2 \in \mathcal F(G)$ denotes the sequence satisfying $$\mathsf v_g(S_1 \boldsymbol{\cdot} S_2) = \mathsf v_g(S_1) + \mathsf v_g( S_2)  \text{ for all } g \in G.$$
The symbol
$\boldsymbol{\cdot}$ distinguishes multiplication in $\mathcal F(G)$
from multiplication in $G$.  Brackets are used for powers in
$\mathcal F(G)$; thus
$g^{[k]}=\underbrace{g\boldsymbol{\cdot}\ldots
\boldsymbol{\cdot}g}_{k\ {\rm terms}}$
for $k\in\mathbb N_0$.
The length and support of $S$ are, respectively,
$|S|=\sum_{g\in G}\mathsf v_g(S)$ and $\supp(S)=\{g\in G:\mathsf v_g(S)>0\}.$
For $S,T\in\mathcal F(G)$, we call $T$ a subsequence of $S$ and  write $T\mid S$ if
$\mathsf v_g(T)\leq\mathsf v_g(S)$ for every $g\in G$.  In this case,
$S\boldsymbol{\cdot}T^{[-1]}$ denotes the sequence remaining after the
terms of $T$ have been deleted from $S$. Subsequences $T_1,\ldots,T_k$ of
$S$ are called mutually disjoint if
$T_1\boldsymbol{\cdot}\ldots\boldsymbol{\cdot}T_k\mid S.$ The empty sequence has length
zero.

A homomorphism $\varphi:G\to K$ acts termwise on sequences.  Namely, if
$S=g_1\boldsymbol{\cdot}\ldots\boldsymbol{\cdot}g_\ell$, then
$\varphi(S)=\varphi(g_1)\boldsymbol{\cdot}\ldots
\boldsymbol{\cdot}\varphi(g_\ell)\in\mathcal F(K).$
We use the same symbol $\varphi$ for the resulting monoid homomorphism
$\mathcal F(G)\to\mathcal F(K)$.

Let $S=g_1\boldsymbol{\cdot}\ldots\boldsymbol{\cdot}g_\ell$.  Because
$G$ need not be abelian, the product of all terms of $S$ may depend on
their order.  We therefore put
$$\pi(S)=
\bigl\{
g_{\tau(1)}*\cdots*g_{\tau(\ell)}:
\tau\ \text{is a permutation of }[1,\ell]
\bigr\}.$$
For $k\in[1,\ell]$, define
$\Pi_k(S)=\bigcup\limits_{\substack{T\mid S\\ |T|=k}}\pi(T)$ and
$\Pi(S)=\bigcup_{k=1}^{\ell}\Pi_k(S).$
Thus $\Pi_k(S)$ consists of all products obtained from $k$ terms of $S$,
whereas $\Pi(S)$ contains all products arising from nonempty
subsequences.  We call $S$ a \emph{product-one sequence} if
$1\in\pi(S)$, and \emph{product-one free} if $1\notin\Pi(S)$. A nonempty product-one sequence is called \emph{minimal} if it cannot be written as the product of two nonempty product-one sequences.
As usual,
the empty sequence is regarded as product-one.

\begin{samepage}
\begin{definition}\label{definition:invariants}
Let $G$ be a finite group. The exponent of $G$ is
$$\exp(G)=\operatorname{lcm}\{\ord(g):g\in G\}.$$  We denote by
\begin{itemize}
\item[$\bullet$] $\mathsf d(G)$ the largest integer $\ell$ such that
there exists a product-one free sequence over $G$ of length $\ell$;

\item[$\bullet$] $\mathsf D(G)$ the largest integer $\ell$ such that
there exists a minimal product-one sequence over $G$ of length $\ell$;

\item[$\bullet$] $\eta(G)$ the least integer $\ell$ such that every
sequence over $G$ of length at least $\ell$ contains a nonempty
product-one subsequence of length at most $\exp(G)$;

\item[$\bullet$] $s(G)$ the least integer $\ell$ such that every
sequence over $G$ of length at least $\ell$ contains a product-one
subsequence of length exactly $\exp(G)$;

\item[$\bullet$] $E(G)$ the least integer $\ell$ such that every
sequence over $G$ of length at least $\ell$ contains a product-one
subsequence of length exactly $|G|$.
\end{itemize}
\end{definition}
\end{samepage}

\section{Auxiliary results and proof of Theorem \ref{theorem:main}}

We begin with some structural lemmas for the groups considered in this paper.

\begin{lemma}\label{lemma:normality}\cite[Corollary 4.10]{Hungerford}
Let $p$ be the smallest prime divisor of $|G|$.  Every subgroup of
index $p$ in $G$ is normal.
\end{lemma}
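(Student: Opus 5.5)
The statement is the classical fact that a subgroup of index equal to the smallest prime divisor $p$ of $|G|$ is normal. I would prove it by letting $G$ act on cosets. Let $K\le G$ have index $p$, and let $G$ act by left multiplication on the set $X=\{gK:g\in G\}$ of left cosets, which has $|X|=p$. This action gives a homomorphism $\rho:G\to \mathrm{Sym}(X)\cong S_p$.

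Its kernel $N=\ker\rho=\bigcap_{g\in G}gKg^{-1}$ is a normal subgroup of $G$ contained in $K$. The central step is to show $N=K$, which I would do by computing the index $[G:N]$.

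\begin{enumerate}
\item Since $G/N$ embeds in $S_p$, the index $[G:N]$ divides $|S_p|=p!$.
\item Since $[G:N]$ also divides $|G|$, every prime divisor $q$ of $[G:N]$ divides $|G|$, so $q\ge p$.
\item Every prime divisor of $p!$ is at most $p$. Hence every prime divisor of $[G:N]$ equals $p$.
\item Because $p^2\nmid p!$, we conclude $[G:N]\in\{1,p\}$.
\item From $N\subseteq K$ we get $[G:N]=[G:K][K:N]=p\,[K:N]\ge p$.
\end{enumerate}

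Combining the last two points gives $[G:N]=p$, so $[K:N]=1$ and $K=N$. Therefore $K$ is normal in $G$.

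The only delicate point is step 2: it needs the divisibility $[G:N]\mid |G|$ together with the minimality of $p$ among the prime divisors of $|G|$. The rest is routine.
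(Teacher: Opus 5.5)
Your proof is correct. It is the standard coset-action argument: the kernel $N=\bigcap_{g\in G}gKg^{-1}$ of $G\to S_p$ has index dividing both $|G|$ and $p!$, so $[G:N]=p$ and $N=K$. The paper gives no proof of its own and simply cites Hungerford's Corollary 4.10, which rests on this coset-action argument.
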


A more general version of the following lemma can be found in \cite[pp. 129--130, Theorem 21]{Zassenhaus}. For the reader's convenience, we give a proof of it below.

\begin{lemma}\label{lemma:grouppresentation}
Suppose that $H=\langle y\rangle$ is cyclic of order $n$ and has index
$p$ in $G$.  If $H\trianglelefteq G$, then there is an element
$x\in G$ and integers $a,r$ such that
$$G=\langle x,y\mid y^n=1,\ x^p=y^a,\ x^{-1}yx=y^r\rangle,$$
where
$r^p\equiv1\pmod n$ and $a(r-1)\equiv0\pmod n.$
Moreover, $\exp(G)\in\{n,pn\}.$
\end{lemma}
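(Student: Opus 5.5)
Since $G/H$ has prime order $p$, it is cyclic. I will pick any $x\in G\setminus H$; then $xH$ generates $G/H$, so $G=\langle x,y\rangle$, $x^p\in H$, and every element of $G$ can be written uniquely as $x^iy^j$ with $i\in[0,p-1]$ and $j\in[0,n-1]$. Normality of $H$ gives $x^{-1}yx\in H$. Conjugation by $x$ is an automorphism of $H$, so $x^{-1}yx=y^r$ for some $r$ with $\gcd(r,n)=1$. I also write $x^p=y^a$. These are the defining data.

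Next I verify the congruences. Iterating the conjugation gives $x^{-p}yx^{p}=y^{r^p}$. But $x^p=y^a$ lies in the abelian group $H$, so conjugation by $x^p$ is trivial, and hence $r^p\equiv1\pmod n$. Further, $x$ commutes with $x^p=y^a$, so
$$y^a=x^{-1}y^ax=y^{ar},$$
which gives $a(r-1)\equiv0\pmod n$.

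To see that these relations present $G$, I will argue by counting. Let $\Gamma$ be the abstractly presented group. It maps onto $G$, since the relations hold in $G$ and $x,y$ generate $G$. The relation $x^{-1}yx=y^r$ (together with $\gcd(r,n)=1$, so that $y\mapsto y^r$ is an automorphism and $xyx^{-1}$ is also a power of $y$) lets every word in $\Gamma$ be rewritten as $x^iy^j$. Using $x^p=y^a$ we may take $i\in[0,p-1]$, and using $y^n=1$ we may take $j\in[0,n-1]$. Hence $|\Gamma|\le pn=|G|$, and the surjection $\Gamma\to G$ is an isomorphism. No von Dyck/extension existence argument is needed, because $G$ itself realizes the presentation. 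This step is routine.

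For the exponent, $\ord(y)=n$ gives $n\mid\exp(G)$, and $\exp(G)\mid |G|=pn$. Since $p$ is prime, the only divisors of $pn$ that are multiples of $n$ are $n$ and $pn$. Therefore $\exp(G)\in\{n,pn\}$. I expect no real obstacle here. The only point needing care is the normal-form rewriting in the presentation argument, which rests on the coprimality $\gcd(r,n)=1$.
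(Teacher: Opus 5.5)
Your proof is correct and follows the paper's own argument for the congruences and the exponent: iterated conjugation by $x$ combined with the triviality of conjugation by $x^p=y^a\in H$, the fact that $x$ commutes with $x^p$, and $n\mid\exp(G)\mid pn$. Your additional counting argument shows that the abstractly presented group $\Gamma$ has order at most $pn$ and so maps isomorphically onto $G$; this supplies a step the paper leaves implicit, namely that the listed relations actually define $G$.
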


\begin{proof}
Choose $x\in G$ so that $xH$ generates $G/H\cong C_p$.  Since
$x^p\in H$, we have $x^p=y^a$.  Normality of $H$ implies that
conjugation by $x$ induces an automorphism of $H$, so
$$x^{-1}yx=y^r$$ for some $r$ relatively prime to $n$.

Notice that conjugation by $x^p=y^a$ is trivial on $H$.  Applying conjugation by
$x$ successively $p$ times to $y$ therefore gives
$y^{r^p}=y$, and hence $r^p\equiv1\pmod n$.  Also $x$ commutes with
its power $x^p=y^a$.  Conjugating $y^a$ by $x$ gives
$y^{ar}=y^a$, and hence $a(r-1)\equiv0\pmod n$.

Since $|G|=pn$, it follows that $\exp(G)\mid pn$. On the other hand, since $\ord(y)=n$, it follows that $n\mid \exp(G)$, and so $\exp(G)\in \{n,pn\}$.
\end{proof}

\begin{lemma}\label{lemma:ranktwoquotient}
Let $G$ be a finite nonabelian group, and let $p$ be the smallest prime
divisor of $|G|$. Suppose that $G$ has a cyclic subgroup
$H=\langle y\rangle$
of index $p$ with $\exp(G)=|H|$.
Then $K=\langle y^p\rangle$
is normal in $G$ and $G/K\cong C_p\oplus C_p.$
\end{lemma}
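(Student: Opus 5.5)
The plan is to use the presentation of Lemma \ref{lemma:grouppresentation}. Write $|H|=n$, $G=\langle x,y\rangle$ with $x^p=y^a$ and $x^{-1}yx=y^r$, where $r^p\equiv1\pmod n$ and $a(r-1)\equiv0\pmod n$. The argument has three steps.

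\textbf{Step 1: $p\mid n$.} The hypothesis $\exp(G)=|H|=n$ forces $\ord(g)\mid n$ for every $g\in G$. Suppose $p\nmid n$. Then $|G|=pn$ has a Sylow $p$-subgroup $P$ of order $p$. Since $P\cap H=1$, we get $G=H\rtimes P$. Conjugation by a generator of $P$ induces an automorphism of $H$ whose order divides $p$. The order of $\operatorname{Aut}(H)$ is $\varphi(n)$, and $\varphi(n)$ has all prime factors smaller than the largest prime divisor of $n$. More precisely, every prime dividing $\varphi(n)$ divides $n$ or some $q-1$ with $q\mid n$. Because $p$ is the smallest prime dividing $pn$, we have $p<q$ for every prime $q\mid n$, but $p\mid q-1$ is still possible a priori.

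Here I use minimality more carefully. A standard fact is that if $p$ is the smallest prime divisor of $|G|$, then any action of a $p$-group on a cyclic group of order coprime to $p$ is trivial. Indeed, $\operatorname{Aut}(C_{q^k})$ has order $q^{k-1}(q-1)$, and $p\mid q-1$ is possible, so this needs a different justification.

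The cleaner route is to observe that $\gcd(p,\varphi(n))$ need not be $1$. For example, $C_7\rtimes C_3$ has $p=3$, index $3$, and $\exp=21$. So the case $p\nmid n$ really does occur, but there $x$ has order $p$ and $xy$ has order divisible by $pn$... Instead, argue directly with the exponent. If $p\nmid n$, take $P=\langle z\rangle$ of order $p$. The product $zy$ has image in $G/H$ of order $p$, so $p\mid\ord(zy)$, and $\ord(zy)\mid n$ by the exponent hypothesis. This forces $p\mid n$, a contradiction. Hence $p\mid n$.

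\textbf{Step 2: $K$ is normal.} Since $p\mid n$, $K=\langle y^p\rangle$ is the unique subgroup of index $p$ in the cyclic group $H$. It is characteristic in $H$, and $H\trianglelefteq G$, so $K\trianglelefteq G$. Then $|G/K|=p^2$, which makes $G/K$ abelian, so $G/K\cong C_{p^2}$ or $C_p\oplus C_p$.

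\textbf{Step 3: $G/K$ is not cyclic.} This is the main obstacle. Suppose $G/K\cong C_{p^2}$, generated by $gK$. Since $G/K$ is cyclic, $G=\langle g\rangle K$, and $K\le H\le\ldots$. I would show this forces $K\le\Phi(G)$-type behavior. Concretely, $G/K$ cyclic implies $G/K$ has a unique subgroup of order $p$, namely $H/K$. So $g\notin H$, and $g^p$ generates $H$ modulo $K$, hence $g^p=y^{u}$ with $p\nmid u$.

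Now $\ord(g)=p\cdot\ord(g^p)=p\cdot\ord(y^u)=pn$, because $p\nmid u$ and $y^u$ generates the $p$-part modulo $K$. To be careful, $\ord(y^u)=n/\gcd(u,n)$, whose $p$-part equals the $p$-part of $n$. So the $p$-part of $\ord(g)$ is $p$ times the $p$-part of $n$, which does not divide $n$. This contradicts $\exp(G)=n$. Hence $G/K\cong C_p\oplus C_p$.

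I note that nonabelianness is not even needed here. The key ingredients are Step 1, that $p\mid n$ via $p\mid\ord(g)$ for $g\notin H$, and the $p$-adic order computation in Step 3.
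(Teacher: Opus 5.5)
Your final argument is correct and is essentially the paper's. Both proofs get $p\mid n$ from the exponent hypothesis, and both get $K\trianglelefteq G$ because $K$ is characteristic in the normal subgroup $H$ (normal by Lemma~\ref{lemma:normality}, which you use only implicitly through the presentation). Both then exclude $G/K\cong C_{p^2}$ with the same computation: the paper argues directly that $\ord(x^p)\mid n/p$, so $x^p\in K$, whereas you argue contrapositively that a generator $g$ of a cyclic $G/K$ would have $g^p\in H\setminus K$, making the $p$-part of $\ord(g)$ exceed that of $n$.

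In a write-up, delete the Step~1 detours: the ``standard fact'' is false, as your own example $C_7\rtimes C_3$ shows, and there every element outside $C_7$ has order $3$, not a multiple of $pn$. Also drop the interim claim $\ord(g)=pn$, which you already correct. Cauchy's theorem alone gives an element of order $p$, hence $p\mid\exp(G)=n$.
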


\begin{proof} By Lemma \ref{lemma:normality}, we have $H\trianglelefteq G$.
Denote $n=|H|$. Since $\exp(G)=n$, we have $p\mid n$, for otherwise, the existence of the element of order $p$ contradicts $\exp(G)=n$. The subgroup $K=\langle y^p\rangle$ is characteristic in the cyclic
group $H$.  Since $H\trianglelefteq G$, characteristicity in $H$
implies $K\trianglelefteq G$. Since
$|H/K|=p$ and
$[G/K:H/K]=[G:H]=p$,
it follows that $|G/K|=p^2$. By Lemma \ref{lemma:grouppresentation}, we see that $G/K$ is generated by $\{xK,yK\}$. Notice $\ord(yK)=p$. To show the conclusion, it suffices to show that $\ord(xK)=p$. Since $\ord(x)\mid n$, we see
\begin{equation}\label{equation:ord(xpmid}
\ord(x^p)\mid \frac{n}{p}.
\end{equation} Since $x^p=y^a\in H$ and $|K|=\frac{n}{p}$, it follows from \eqref{equation:ord(xpmid} that $x^p$ belongs to the unique subgroup $K$ of order $\frac{n}{p}$ of $H$. This implies that $(xK)^p=x^p K=y^a K=K$, completing the proof.
\end{proof}

The following easy lemma will be used often when we lift a result from the quotient group to the original group.

\begin{lemma}\label{lemma:quotientlifting} Let $G$ be a finite group, and let $K$ be a normal subgroup of $G$ such that $G/K$ is abelian.  Let $T$ be a sequence over $G$, and let $\varphi$ be the canonical epimorphism of $G$ onto $G/K$. If $\varphi(T)$ is product-one over $G/K$, then
$\pi(T)\subseteq K.$
\end{lemma}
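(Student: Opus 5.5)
The plan is to show that every ordering of the terms of $T$ yields a product lying in $K$. Write $T=g_1\boldsymbol{\cdot}\ldots\boldsymbol{\cdot}g_\ell$ and fix an arbitrary permutation $\tau$ of $[1,\ell]$. Since $\varphi$ is a group homomorphism, we have
$$\varphi\bigl(g_{\tau(1)}*\cdots*g_{\tau(\ell)}\bigr)=\varphi(g_{\tau(1)})\cdots\varphi(g_{\tau(\ell)}).$$
Because $G/K$ is abelian, the right-hand side does not depend on $\tau$. It is therefore the unique element of $\pi(\varphi(T))$.

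By hypothesis, $\varphi(T)$ is product-one, so $1_{G/K}\in\pi(\varphi(T))$. As $\pi(\varphi(T))$ is a singleton in the abelian group $G/K$, it follows that $\pi(\varphi(T))=\{1_{G/K}\}$. Hence $\varphi(g_{\tau(1)}*\cdots*g_{\tau(\ell)})=K$, which means $g_{\tau(1)}*\cdots*g_{\tau(\ell)}\in\ker\varphi=K$. Since $\tau$ was arbitrary, we obtain $\pi(T)\subseteq K$.

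The empty sequence needs a separate check: there $\pi(T)=\{1\}\subseteq K$ trivially. There is no real obstacle in this argument. The only point needing care is the observation that commutativity of the quotient makes the image of the product independent of the ordering.
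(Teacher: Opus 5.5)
Your proof is correct. The paper states this lemma without proof, calling it easy, and your argument is the standard verification it leaves to the reader: in the abelian quotient every ordering of $T$ has the same image, which must therefore be $1_{G/K}$, so every ordering lies in $\ker\varphi=K$.
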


The following prescribed-length theorem is due to Gao; see
\cite[Theorem 3.2]{GaoRestrictedII} or
\cite[Theorem 6.10(2)]{GaoGeroldingersurvey}.

\begin{lemma}\label{lemma:gao>D+G-1}
Let $G$ be a finite abelian group,
and let $N$ be a positive integer
such that $\exp(G)\mid N$ and $N\geq |G|$.  Every sequence over $G$ of
length at least $N+\mathsf d(G)$
has a product-one subsequence of length $N$.
\end{lemma}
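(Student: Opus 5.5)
We will argue by the standard translation-and-padding method. Write $n=\exp(G)$ and let $S$ be a sequence over $G$ (written additively inside the argument, as the product-one condition over an abelian group is just ``sum zero'') with $|S|=N+\mathsf d(G)$. Since $n\mid N$, translating every term of $S$ by a fixed $g\in G$ changes the sum of any $N$-term subsequence by $Ng=0$. So without loss of generality the most frequent element of $S$ is $0$; let $h=\mathsf v_0(S)$ and $S=0^{[h]}\boldsymbol{\cdot}T$. If $h\geq N$ we are done. Otherwise it suffices to find a product-one subsequence $W\mid T$ with $N-h\leq |W|\leq N$, because we can then append $N-|W|$ zeros.

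To build $W$ we will extract disjoint nonempty product-one subsequences from $T$ in two phases. \textbf{Phase 1:} as long as the remaining part of $T$ has length at least $|G|$, we extract a product-one subsequence of length at most $h$. The key input is Gao's short-zero-sum lemma: a sequence of length $\geq|G|$ in which every element has multiplicity $\leq h$ contains a nonempty product-one subsequence of length $\leq h$. We would prove it by splitting the sequence into $h$ squarefree blocks and applying a pigeonhole/Cauchy--Davenport-type growth argument to the subset sums of the blocks. \textbf{Phase 2:} once fewer than $|G|$ terms remain, we keep extracting arbitrary product-one subsequences while at least $\mathsf d(G)+1$ terms remain. This is always possible by the definition of $\mathsf d(G)$, and it leaves at most $\mathsf d(G)$ terms.

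After both phases the extracted terms have total length at least $|T|-\mathsf d(G)=N-h$. We now accumulate the pieces in the order they were extracted and stop at the first moment the running length reaches $N-h$. If we stop during Phase 1, the overshoot is at most $h-1$, so the running length is at most $N-1$ and we are done. This uses only the Phase-1 bound: the last piece has length $\leq h$ and the running length before it is at most $N-h-1$.

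The main obstacle is the case where Phase 1 ends before the running length reaches $N-h$. Then the Phase-2 pieces may be long, up to $\mathsf D(G)$, and the overshoot is not controlled a priori. Here we must use $N\geq|G|$. When Phase 1 ends, fewer than $|G|$ terms remain, so the Phase-1 pieces already have total length greater than $|T|-|G|=N+\mathsf d(G)-h-|G|\geq \mathsf d(G)-h$. Hence the total length still needed is less than $N-\mathsf d(G)$. We would try to finish directly from the remainder $R$, which satisfies $|R|<|G|$. We take a single product-one subsequence of $R$ whose complement in $R$ is product-one free: for instance, the largest product-one subsequence of $R$, which has length $\geq|R|-\mathsf d(G)$ and $\leq|R|<|G|\leq N$. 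Then we would adjust the Phase-1 pieces, dropping some if necessary (each has length $\leq h$, so lengths can be tuned with granularity $h$), to land in $[N-h,N]$. The delicate verification is that these two length windows always overlap. If that bookkeeping fails, we would instead fall back on Gao's original route: first prove the case $N=|G|$, i.e.\ $E(G)\leq|G|+\mathsf d(G)$, and then induct on $N/n$, peeling off an $n$-term or $|G|$-term product-one subsequence at each step.
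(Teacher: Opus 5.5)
The paper gives no proof of this lemma; it quotes it from Gao. Your argument is essentially Gao's classical proof, and it goes through. That argument translates so that the most frequent term is the identity, with multiplicity $h$, then peels off product-one pieces of length at most $h$ while at least $|G|$ terms remain, and finally pads with identities.

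The step you call delicate is immediate, so Phase~2 and the fallback are unnecessary. Let $T_1,\ldots,T_k$ be the Phase-1 pieces and $R$ the remainder, so $|R|\le|G|-1$. Let $Z$ be a longest product-one subsequence of $R$. Its complement in $R$ is product-one free, so $|R|-|Z|\le\mathsf d(G)$. Then
$|Z|+\sum_i|T_i|\ge|T|-\mathsf d(G)=N-h$, while $|Z|\le|G|-1\le N-1$.

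Start the accumulation with $Z$ instead of ending with it. If $|Z|\ge N-h$, take $W=Z$. Otherwise adjoin $T_1,T_2,\ldots$ one at a time. The first partial union of length at least $N-h$ has length at most $(N-h-1)+h=N-1$. This handles both of your cases at once, and the hypothesis $N\ge|G|$ enters exactly through $|Z|\le N-1$.

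For the short product-one lemma, make the growth step precise. Assume no term of the sequence $X$ equals $1$, and split $X$ into $h$ squarefree blocks with supports $A_1,\ldots,A_h$. The tool is Scherk's theorem: if $1\in A\cap B$ and $1=ab$ with $a\in A$, $b\in B$ only for $a=b=1$, then the product set $AB$ satisfies $|AB|\ge|A|+|B|-1$. Apply it successively to the sets $\{1\}\cup A_i$, which encode taking at most one term from each block.

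The absence of a nonempty product-one subsequence of length at most $h$ is exactly the uniqueness hypothesis needed at every stage. The iteration then gives $|G|\ge 1+\sum_i|A_i|=1+|X|$, contradicting $|X|\ge|G|$. Cauchy--Davenport by itself only covers groups of prime order. Working with full subset sums of the blocks, rather than $\{1\}\cup A_i$, would destroy the length bound $h$.
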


We use the following results for finite abelian groups of rank at most two.

\begin{lemma}\label{lemma:someresultsrank2}
Let $k\geq 2$ be an integer. Then the following conclusions hold.

\begin{enumerate}[label=\rm(\roman*)]
\item Every sequence of length $4k-3$ over $C_k\oplus C_k$ has a
product-one subsequence of length $k$ (see \cite{Reiher}, or \cite[Theorem 4.2.10]{GRuzsa});

\item Every sequence of length $3k-2$ over $C_k\oplus C_k$ has a
product-one subsequence of length $k$ or $2k$ (\cite[Theorem 6.7 (2)]{GaoGeroldingersurvey});

\item Every sequence of length $2k-1$ over $C_k$ has a product-one
subsequence of length $k$ (see \cite{EGZ});

\item If a sequence $T$ over $C_k$ has length $2k-2$ and has no product-one subsequence of length $k$, then
$$T=\alpha^{[k-1]}\boldsymbol{\cdot}\beta^{[k-1]},$$
where $\alpha\beta^{-1}$ generates $C_k$ (see \cite[Lemma 4]{BialostockiDierker} and \cite[Theorem 3.1]{FloresOrdaz}).
\end{enumerate}
\end{lemma}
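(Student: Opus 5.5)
All four assertions are classical results on finite abelian groups of rank at most two. My plan is to take each from the cited literature rather than reprove it. Still, I would organize the justification so that the elementary items are self-contained and the deep one is isolated. I would go in the order (iii), (iv), (ii), (i), since the later items build on the same multiplicativity mechanism.

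For (iii), the Erd\H{o}s--Ginzburg--Ziv theorem, I would follow the standard route.
\begin{enumerate}[label=\rm(\alph*)]
\item Prove the case of a prime $k=q$ by the Chevalley--Warning theorem. Given $g_1,\ldots,g_{2q-1}\in\mathbb Z_q$, consider the system
$$\sum_i x_i^{q-1}=0,\qquad \sum_i g_ix_i^{q-1}=0$$
over $\mathbb F_q$. It has $2q-1$ variables and total degree $2(q-1)<2q-1$, so it has a nontrivial solution. The support of that solution has size exactly $q$ and gives the required zero-sum subsequence.
\item Pass to composite $k=ab$ by the multiplicativity $s(C_{ab})\le s(C_a)$-type argument. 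From a sequence of length $2ab-1$, repeatedly extract $2b-1$ blocks of length $a$ whose sums lie in $aC_{ab}\cong C_b$. Then apply the result in $C_b$ to these block sums.
\end{enumerate}

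For (iv), the inverse statement, I would argue as follows.
\begin{enumerate}[label=\rm(\alph*)]
\item Translate $T$ so that one value is $0$; this is harmless because shifting every term by $c$ changes the sum of any $k$ terms by $kc=0$.
\item Show that some element has multiplicity $k-1$. If every multiplicity were at most $k-2$, one could add a suitable element to $T$ and apply (iii) carefully, or use Kneser/Cauchy--Davenport-type growth of the sets of $j$-term sums, to force a $k$-term zero-sum. This would be a contradiction.
\item Once $\alpha^{[k-1]}$ is present, the remaining $k-1$ terms must all be equal to some $\beta$. Otherwise a $k$-term zero-sum can be assembled by mixing copies of $\alpha$ with two distinct other values.
\item Show that $\alpha\beta^{-1}$ must generate $C_k$. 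If it did not, say $\langle\alpha\beta^{-1}\rangle$ had order $d<k$, then $d$ copies of one element exchanged for the other would produce a zero-sum of length $k$.
\end{enumerate}
I would cite \cite{BialostockiDierker,FloresOrdaz} for the full case analysis.

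For (ii), the claim that $3k-2$ terms force a zero-sum of length $k$ or $2k$, I would again first treat $k$ prime. This uses Chevalley--Warning with three equations,
$$\sum x_i^{q-1}=0,\qquad \sum g_i^{(1)}x_i^{q-1}=0,\qquad \sum g_i^{(2)}x_i^{q-1}=0,$$
in $3q-2>3(q-1)$ variables. This yields a nonempty zero-sum of length $q$ or $2q$. I would then lift to composite $k$ by the same multiplicativity scheme, using (iii) in the intermediate cyclic quotient, as in \cite[Theorem 6.7]{GaoGeroldingersurvey}.

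Item (i) is Reiher's theorem (the Kemnitz conjecture), and I expect it to be the main obstacle. The reduction to $k$ prime is again routine by multiplicativity. The prime case, however, needs Reiher's counting argument: several Chevalley--Warning congruences modulo $q$ for the numbers of zero-sum subsequences of lengths $q$, $2q$ and $3q$ in sequences of lengths $3q-3$, $3q-2$ and $4q-3$, combined so that the count of length-$q$ zero-sums is shown to be nonzero. I would not reproduce this; I would simply invoke \cite{Reiher} or \cite[Theorem 4.2.10]{GRuzsa}.
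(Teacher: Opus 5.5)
Your plan matches the paper's treatment. The paper gives no argument for this lemma and simply quotes Reiher (or Geroldinger--Ruzsa) for (i), the Gao--Geroldinger survey for (ii), the Erd\H{o}s--Ginzburg--Ziv theorem for (iii), and Bialostocki--Dierker and Flores--Ordaz for (iv). Your Chevalley--Warning arguments for (iii) and for the prime case of (ii) are correct, as are the block-extraction reductions in (iii) and (i) and steps (a) and (d) of (iv). Two of your sketches, however, would fail or are empty if offered as arguments rather than as pointers to the citations.

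\textbf{The lift of (ii) to composite $k=ab$ fails as described.} The quotient of $C_k\oplus C_k$ by $a(C_k\oplus C_k)\cong C_b\oplus C_b$ is $C_a\oplus C_a$, which is not cyclic, so (iii) cannot cut out the blocks. Producing disjoint blocks of length exactly $a$ with sums in the kernel is a Kemnitz-type problem in $C_a\oplus C_a$ and needs item (i). Even with (i), the count does not close. From $3ab-2$ terms, (i) yields only $3b-3$ such blocks and leaves $3a-2$ terms. Applying (ii) in $C_a\oplus C_a$ to these may return a block of length $2a$. Then (ii) for $C_b\oplus C_b$ no longer applies to the block sums, because one block counts twice.

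One way to repair this is to put a prime on top. Write $k=pm$ with $p$ prime. Extract disjoint blocks whose sums lie in $m(C_k\oplus C_k)\cong C_p\oplus C_p$, using (i) in the quotient $C_m\oplus C_m$, and, by induction, (ii) for $m$ on the last $3m-2$ terms. This gives $3p-2$ block sums $h_i\in C_p\oplus C_p$ with weights $w_i\in\{1,2\}$ and $\sum_i w_i\le 3p-1$. Now apply Chevalley--Warning to $\sum_i w_ix_i^{p-1}=0$ together with the two coordinate equations; this system has $3p-2$ variables and degree sum $3p-3$. It yields a nonempty set of blocks with zero sum and total weight $p$ or $2p$, that is, a subsequence of length $k$ or $2k$. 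So (ii) for composite $k$ rests on (i) plus a weighted prime case, not on (iii).

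\textbf{Step (b) of (iv) is the whole content of the inverse theorem, and your sketch does not supply it.} Adjoining an element and applying (iii) only shows that every element of $C_k$ is a sum of $k-1$ terms of $T$. The Cauchy--Davenport theorem applies only when $k$ is prime. For composite $k$, the cited papers are doing the real work.

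Your step (c) is also imprecise: the zero-sum need not involve only two distinct values. Once some $\alpha$ is known to occur $k-1$ times, (c) and (d) follow together. Translate $\alpha$ to $0$. Then $T$ has a $k$-term zero-sum subsequence if and only if the remaining $k-1$ terms have a nonempty zero-sum subsequence (pad with zeros). Hence those terms form a zero-sum free sequence of length $k-1$ over $C_k$, which must be $g^{[k-1]}$ with $\ord(g)=k$.
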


We next establish a lemma concerning the special factorizations used in the proof of our main theorem.

\begin{lemma}\label{lemma:pairfactorization}
Let $W$ be a product-one sequence of length $4p$ over
$C_p\oplus C_p$, and let $u,v$ be two given terms of
$W$.  Then $W$ has a factorization
$W=W_1\boldsymbol{\cdot}W_2\boldsymbol{\cdot}W_3$
into product-one subsequences satisfying
$|W_1|=|W_2|=p$, $|W_3|=2p,$
such that $u$ and $v$ occur in the same product-one subsequence $W_i$ for some $i\in [1,3]$.
\end{lemma}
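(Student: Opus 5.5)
The idea is to peel off product-one blocks that avoid $u$ and $v$, so that $u$ and $v$ both end up in the leftover block. Since $C_p\oplus C_p$ is abelian, a sequence is product-one exactly when the sum of its terms is zero. The complement of a product-one subsequence inside a product-one sequence is therefore again product-one. This is what makes the construction work: we only ever need to find a product-one piece not containing $u$ and $v$, and the complement automatically takes care of itself.

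\textbf{Step 1.} Let $W'=W\boldsymbol{\cdot}(u\boldsymbol{\cdot}v)^{[-1]}$, which has length $4p-2\geq 4p-3$. By Lemma~\ref{lemma:someresultsrank2}(i), applied with $k=p\geq 2$, $W'$ contains a product-one subsequence $W_1$ of length $p$. By construction $W_1$ contains neither $u$ nor $v$. Put $R=W\boldsymbol{\cdot}W_1^{[-1]}$. Then $|R|=3p$, $R$ is product-one because $W$ and $W_1$ are, and $u\boldsymbol{\cdot}v\mid R$.

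\textbf{Step 2.} Let $R'=R\boldsymbol{\cdot}(u\boldsymbol{\cdot}v)^{[-1]}$, which has length exactly $3p-2$. By Lemma~\ref{lemma:someresultsrank2}(ii), $R'$ has a product-one subsequence $T$ of length $p$ or $2p$, and $T$ avoids $u$ and $v$. The complement $R\boldsymbol{\cdot}T^{[-1]}$ is product-one and contains both $u$ and $v$.

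\begin{itemize}
\item If $|T|=p$, set $W_2=T$ and $W_3=R\boldsymbol{\cdot}T^{[-1]}$. Then $|W_3|=2p$, and $u,v$ lie in $W_3$.
\item If $|T|=2p$, set $W_3=T$ and $W_2=R\boldsymbol{\cdot}T^{[-1]}$. Then $|W_2|=p$, and $u,v$ lie in $W_2$.
\end{itemize}

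In either case $W=W_1\boldsymbol{\cdot}W_2\boldsymbol{\cdot}W_3$ has the required lengths, and $u,v$ lie in a common part.

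There is no real obstacle here. The only points to check are the two length counts: $4p-2\geq 4p-3$ for Lemma~\ref{lemma:someresultsrank2}(i) and $|R'|=3p-2$ for Lemma~\ref{lemma:someresultsrank2}(ii). These are exactly what the cited results need. Note also that the argument uses the two terms $u,v$ as distinct positions in $W$, even if $u=v$ as group elements, so removing them lowers the length by exactly $2$.
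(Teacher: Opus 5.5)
Your proof is correct and follows the paper's argument step for step. You first remove $u,v$ and extract a $p$-term product-one block via Lemma~\ref{lemma:someresultsrank2}(i). You then apply Lemma~\ref{lemma:someresultsrank2}(ii) to the remaining $3p-2$ terms and assign the block found there and its product-one complement to $W_2$ or $W_3$ according to the block's length.
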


\begin{proof} Remove the specified occurrences $u,v$.  The remaining sequence has
length $4p-2$, so Lemma \ref{lemma:someresultsrank2} (i) gives a $p$-term
product-one subsequence $W_1$ disjoint from $u,v$.  Put
$$R=W\boldsymbol{\cdot}W_1^{[-1]}.$$
Then $R$ is product-one, has length $3p$, and contains $u,v$.

Apply Lemma \ref{lemma:someresultsrank2} (ii) to
$R\boldsymbol{\cdot}(u\boldsymbol{\cdot}v)^{[-1]}$, whose length is
$3p-2$.  It has a product-one subsequence $Y$ of length $p$ or $2p$.
Since $R$ is product-one, the complementary sequence
$R\boldsymbol{\cdot}Y^{[-1]}$ is also product-one.  If $|Y|=p$, take
$W_2=Y$ and $W_3=R\boldsymbol{\cdot}Y^{[-1]}$.  If $|Y|=2p$, reverse
these choices.  In both cases $u,v$ lie in the same product-one subsequence,
and the required lengths are $p,p,2p$, as desired.
\end{proof}

The following lemma is a key ingredient in the proof of our main theorem.

\begin{lemma}\label{lemma:blockwithstructure} Let $G$ be a finite group, and let $K$ be a cyclic normal subgroup of $G$ of order $|K|=q\geq 2$ such that
$G/K\cong C_p\oplus C_p$ for some prime $p$.  Let
$T=V_1\boldsymbol{\cdot}\ldots\boldsymbol{\cdot}V_{2q-2}
\boldsymbol{\cdot}U \in \mathcal{F}(G)$
be such that
$|V_1|=\cdots=|V_{2q-2}|=p$, $|U|=2p,$
and all $\varphi(V_1),\ldots,\varphi(V_{2q-2}),\varphi(U)$ are product-one sequences over
$G/K$, where $\varphi: G\rightarrow G/K$ is the canonical epimorphism. Suppose
that $T$ has no product-one subsequence of length $pq$.  Then the following conclusions hold.

\begin{enumerate}[label=\rm(\roman*)]
\item $|\pi(V_1)|=\cdots=|\pi(V_{2q-2})|=|\pi(U)|=1$, and $T$ is a product-one sequence;
\item the subgroup generated by $\supp(T)$ is abelian.
\end{enumerate}
\end{lemma}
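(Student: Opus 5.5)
By Lemma \ref{lemma:quotientlifting}, applied to $K$ with the abelian quotient $G/K\cong C_p\oplus C_p$, we have $\pi(V_i)\subseteq K$ for every $i$ and $\pi(U)\subseteq K$. The blocks therefore give $2q-1$ subsets of the cyclic group $K\cong C_q$. The basic mechanism is a "set-EGZ" argument. Any $q$ blocks of total length $pq$ form a subsequence of $T$ of length $pq$. Its possible products include every product $k_1*\cdots*k_q$ with $k_j$ ranging over the $\pi$-sets of the chosen blocks, since we may concatenate orderings of the blocks. Because $K$ is abelian, this is the sumset of those $\pi$-sets. A product-one subsequence of length $pq$ arises as soon as this sumset contains $1$. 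We may use $q$ of the $V_i$, or $U$ together with $q-2$ of the $V_i$, since that also has length $2p+(q-2)p=pq$.

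For (i), pick $k_i\in\pi(V_i)$ and $k_U\in\pi(U)$, and consider the sequence $S=k_1\boldsymbol{\cdot}\ldots\boldsymbol{\cdot}k_{2q-2}\boldsymbol{\cdot}k_U^{[2]}$ over $C_q$. Here $U$ is counted with weight two, so formally we work with $2q$ "units". We use $C_q$ EGZ-type results, namely Lemma \ref{lemma:someresultsrank2}(iii),(iv), rather than requiring $q$ prime.

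First we show that every $\pi(V_i)$ is a singleton. The sequence $k_1\boldsymbol{\cdot}\ldots\boldsymbol{\cdot}k_{2q-2}$ has length $2q-2$ and no $q$-term product-one subsequence. By Lemma \ref{lemma:someresultsrank2}(iv) it equals $\alpha^{[q-1]}\boldsymbol{\cdot}\beta^{[q-1]}$ with $\alpha\beta^{-1}$ generating $K$, and this holds for every choice of representatives. Now suppose some $\pi(V_i)$ contains two elements $k\neq k'$. Swapping $k$ for $k'$ must preserve the structure $\alpha^{[q-1]}\boldsymbol{\cdot}\beta^{[q-1]}$, which forces the multiplicities to change from $q-1,q-1$ to $q,q-2$ or to a third value. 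Either way this contradicts (iv), with a small separate check for $q=2$.

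Next we show that $\pi(U)$ is a singleton. For $k_U\in\pi(U)$, adjoin $k_U$ to the $V$-products and look for $q-2$ terms from $\alpha^{[q-1]}\boldsymbol{\cdot}\beta^{[q-1]}$ with product $k_U^{-1}$. The possible products are $\alpha^j\beta^{q-2-j}=\beta^{q-2}(\alpha\beta^{-1})^j$ for $j\in[0,q-2]$. Since $\alpha\beta^{-1}$ generates $K$, these are $q-1$ distinct elements, so they miss exactly one element, namely $\beta^{q-2}(\alpha\beta^{-1})^{q-1}=\alpha^{-1}\beta^{-1}\cdot(\alpha\beta)^{\ldots}$. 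A direct computation shows the missed element is $\alpha^{q-1}\beta^{-1}=(\alpha\beta)^{-1}$, using $\alpha^q=\beta^q=1$. Hence every $k_U\in\pi(U)$ must equal the inverse of the missed element, that is $k_U=\alpha\beta$ (we will verify the exponent bookkeeping), so $|\pi(U)|=1$. Then the total product $\alpha^{q-1}\beta^{q-1}\alpha\beta=1$, so $T$ is product-one, proving (i).

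For (ii), we exploit the rigidity. Each $V_i$ has a single product, which gives strong commutation information. We would also regroup: Lemma \ref{lemma:pairfactorization} lets any two terms $u,v$ of a union of blocks of total length $4p$ be placed in a common product-one block of a new factorization of type $(p,p,2p)$. Applying this to $V_i\boldsymbol{\cdot}V_j\boldsymbol{\cdot}U$ or similar unions, and re-invoking (i) for the new factorization, yields that any block containing both $u$ and $v$ has a unique product. If $uv\neq vu$, we can order the block so that $u,v$ are adjacent and swap them, which changes the product because the other factors are the same. This contradicts uniqueness, so any two terms of $T$ commute and $\langle\supp(T)\rangle$ is abelian.

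The main obstacle is the case analysis in (i): the exact determination of which element is missed, and the edge cases $q=2$ and $p=2$ where blocks are short. The swap argument in (ii) also needs care when $u,v$ are equal as elements but distinct occurrences, and when a block of length $2$ cannot realize the swap nontrivially.
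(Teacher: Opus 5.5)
Your proposal is correct and follows the paper's proof essentially step by step. For (i), you apply Lemma~\ref{lemma:someresultsrank2}(iv) to the $V_i$-products and use a one-term replacement argument to show each $\pi(V_i)$ is a singleton; you then identify the missing element $\alpha^{q-1}\beta^{-1}=(\alpha\beta)^{-1}$ of $\{\alpha^j\beta^{q-2-j}:0\le j\le q-2\}$ to get $\pi(U)=\{\alpha\beta\}$. For (ii), you regroup via Lemma~\ref{lemma:pairfactorization}, re-apply (i), and swap adjacent terms. The auxiliary sequence with $k_U^{[2]}$ and the garbled intermediate expression for the missed element can simply be dropped. The edge cases you flag ($q=2$, $p=2$, equal elements) are harmless, since the swap argument works in any block of length at least $2$.
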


\begin{proof}
(i) By Lemma \ref{lemma:quotientlifting}, $$\pi(V_1),\ldots,\pi(V_{2q-2}), \pi(U)\subseteq K.$$
Choose $\alpha_i\in\pi(V_i)$ for $1\leq i\leq2q-2$.  The sequence
$A=\alpha_1\boldsymbol{\cdot}\ldots
\boldsymbol{\cdot}\alpha_{2q-2}\in \mathcal{F}(K)$
has no $q$-term product-one subsequence.  Otherwise, by
ordering and concatenating the corresponding $q$ subsequences, we would
obtain a product-one subsequence of $T$ of length $pq$.  By Lemma \ref{lemma:someresultsrank2} (iv), we have
\begin{equation}\label{equation A=alphabata}
A=\alpha^{[q-1]}\boldsymbol{\cdot}\beta^{[q-1]},
\end{equation}
where $\alpha,\beta\in K$ and $\alpha\beta^{-1}$ generates the cyclic group $K$.

We first prove that each $\pi(V_i)$ is a singleton.  Replace $\alpha_i$ by an arbitrary other element of
$\pi(V_i)$, and we obtain a sequence $A'$.  The resulting sequence $A'$ of $2q-2$ terms of $K$ still
has no $q$-term product-one subsequence, so it must again have two distinct values with each occurring $q-1$ times in the replaced sequences by Lemma \ref{lemma:someresultsrank2} (iv).

Suppose first that $q\geq 3$.  If the changed term was originally
$\alpha$, the unchanged terms include $q-2$ copies of $\alpha$ and
$q-1$ copies of $\beta$.  A sequence in which exactly two values each
occur $q-1$ times can then be obtained only if the changed value is
$\alpha$.  The same argument applies to a term originally equal to
$\beta$.  If $q=2$, the sequence $A$ consists of exactly the two distinct elements of $C_2$ as all its terms;  once either of the two is fixed, the other is uniquely determined.  Thus $\pi(V_i)$ is a singleton for every $i$.

Now take an arbitrary element $\gamma\in\pi(U)$.  We claim that
\begin{equation}\label{equation gamma*betaneq1}
\gamma*\alpha^j*\beta^{q-2-j}\neq 1 \text{ for each } j\in [0,q-2].
\end{equation}
Since otherwise, the sequence $U$ together with $j$ sequences corresponding to $\alpha$ and
$q-2-j$ sequences corresponding to $\beta$ would be a
product-one sequence of length $2p+(q-2)p=pq,$
a contradiction.

Notice that the $q$ elements $\alpha^{j}*\beta^{q-2-j}=(\alpha*\beta^{-1})^j*\beta^{(q-2)}$ are all distinct as $j$ ranges over $[0,q-1]$. Since $\pi(U)\subseteq K$, it follows from \eqref{equation gamma*betaneq1} that $\gamma\in K\setminus \bigl\{(\alpha^{j}*\beta^{q-2-j})^{-1}:0\leq j\leq q-2\bigr\}=\bigl\{(\alpha^{j}*\beta^{q-2-j})^{-1}:0\leq j\leq q-1\bigr\}\setminus \bigl\{(\alpha^{j}*\beta^{q-2-j})^{-1}:0\leq j\leq q-2\bigr\}=\{(\alpha^{q-1}*\beta^{q-2-(q-1)})^{-1}\}=\{\alpha*\beta\}$. By the arbitrariness of choosing $\gamma$ from $\pi(U)$, we conclude that
\begin{equation}\label{equation pi(U)=}
\pi(U)=\{\alpha*\beta\}
\end{equation}
 and $$|\pi(U)|=1.$$
Combined \eqref{equation A=alphabata} and \eqref{equation pi(U)=}, we derive that $1=\alpha^{q-1}*\beta^{q-1}*(\alpha*\beta)\in \pi(T)$.
This proves (i).

(ii) If $|\supp(T)|=1$, the conclusion is trivial. Hence, we assume $|\supp(T)|>1$, and take any two distinct elements $g,h\in \supp(T)$. It suffices to show that $g*h=h*g$. Since $2q-2\geq 2$, we can take two distinct integers $s,t\in [1,2q-2]$ such that $g\boldsymbol{\cdot} h\mid V_s\boldsymbol{\cdot} V_t\boldsymbol{\cdot} U$ (note that $g\boldsymbol{\cdot} h$ may belong to the same sequence of the three, this will not affect the following argument).
Since $\varphi(V_s\boldsymbol{\cdot} V_t\boldsymbol{\cdot} U)$ is a product-one sequence over $G/K$ of length $4p$, it follows from
Lemma \ref{lemma:pairfactorization} that $V_s\boldsymbol{\cdot} V_t\boldsymbol{\cdot} U=V_s'\boldsymbol{\cdot} V_t'\boldsymbol{\cdot} U'$ such that $\varphi(V_s'), \varphi(V_t'), \varphi(U')$ are product-one sequences over $G/K$ of lengths $p, p, 2p$ respectively, and that
the two terms $g,h$ belong to the same sequence of the three $V_s', V_t', U'$. Then we have a new factorization
$$T=V_1\boldsymbol{\cdot}\ldots\boldsymbol{\cdot}V_{2q-2}
\boldsymbol{\cdot}U\boldsymbol{\cdot} (V_s\boldsymbol{\cdot} V_t\boldsymbol{\cdot} U)^{[-1]}\boldsymbol{\cdot} (V_s'\boldsymbol{\cdot} V_t'\boldsymbol{\cdot} U'),$$ which satisfies the hypotheses of the lemma.  By Conclusion (i), the product set of the subsequence containing $g$ and $h$ is a
singleton. Arrange its terms so that $g$ and $h$ are adjacent. Exchanging
these two terms does not change the product, which gives $g*h=h*g$. This proves (ii).
\end{proof}

To prove the main theorem, we need three further lemmas. The first
inequality in Lemma \ref{lemma:universallower} is the standard
extremal lower bound for $s(G)$. For completeness, we include its
short proof.

\begin{lemma}\label{lemma:universallower}
For every finite group $G$, we have $$s(G)\geq\eta(G)+\exp(G)-1\geq \mathsf d(G)+\exp(G).$$
\end{lemma}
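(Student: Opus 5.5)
We prove the two inequalities separately, each by an explicit extremal sequence.

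\emph{First inequality.} Put $n=\exp(G)$ and $\ell=\eta(G)-1$. By the definition of $\eta(G)$, there is a sequence $S$ over $G$ of length $\ell$ with no nonempty product-one subsequence of length at most $n$. Consider
$$T=S\boldsymbol{\cdot}1^{[n-1]},$$
which has length $\eta(G)+n-2$. We claim $T$ has no product-one subsequence of length exactly $n$. Suppose $W\mid T$ is product-one with $|W|=n$. Write $W=W'\boldsymbol{\cdot}1^{[k]}$ with $W'\mid S$. Since $k\le n-1$, we have $|W'|=n-k\ge 1$, so $W'$ is nonempty. Deleting identity terms from a product-one ordering of $W$ does not change the product, so $W'$ is product-one. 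It is also nonempty and of length at most $n$, contradicting the choice of $S$. Hence $s(G)>\eta(G)+n-2$, that is, $s(G)\ge\eta(G)+\exp(G)-1$.

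\emph{Second inequality.} It suffices to show $\eta(G)\ge\mathsf d(G)+1$. By the definition of $\mathsf d(G)$, there is a product-one free sequence $S$ of length $\mathsf d(G)$. Since $1\notin\Pi(S)$, $S$ has no nonempty product-one subsequence of any length, and in particular none of length at most $\exp(G)$. Therefore $\eta(G)>\mathsf d(G)$. Adding $\exp(G)-1$ to both sides gives $\eta(G)+\exp(G)-1\ge\mathsf d(G)+\exp(G)$.

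There is no real obstacle. The only point needing care is the observation that removing copies of $1$ preserves the product-one property even in the nonabelian setting, since identity terms contribute nothing in any ordering.
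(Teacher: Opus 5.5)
Your proof is correct and follows the paper's argument. For the first inequality you use the same extremal sequence $V\boldsymbol{\cdot}1^{[\exp(G)-1]}$ and write out the verification the paper calls ``easy to check''; for the second you derive $\eta(G)\geq\mathsf d(G)+1$ from a product-one free sequence of length $\mathsf d(G)$, which the paper only asserts.
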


\begin{proof}
Choose a sequence $V\in \mathcal{F}(G)$ of length $\eta(G)-1$ having no nonempty
product-one subsequence of length at most $\exp(G)$, and set
$T=V\boldsymbol{\cdot}1^{[\exp(G)-1]}.$ It is easy to check that $T$ contains no product-one subsequence of length $\exp(G)$, which gives that $s(G)\geq\eta(G)+\exp(G)-1$.
The second inequality follows from $\eta(G)\geq \mathsf d(G)+1$.
\end{proof}

\begin{lemma}\cite{QuGaoLi}\label{lemma:E(G)=d+|G|} Let $G$ be a finite group that has a cyclic subgroup of index $p$, where $p$ is the smallest prime divisor of $|G|$. Then $E(G)=\mathsf d(G)+|G|.$
\end{lemma}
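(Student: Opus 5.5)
The plan is to prove the two inequalities $E(G)\geq\mathsf d(G)+|G|$ and $E(G)\leq\mathsf d(G)+|G|$ separately.

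\emph{Lower bound.} This is the standard construction, in the spirit of Lemma~\ref{lemma:universallower}. Take a product-one free sequence $V$ of length $\mathsf d(G)$ and put $T=V\boldsymbol{\cdot}1^{[|G|-1]}$. Suppose $W\mid T$ is product-one with $|W|=|G|$. Then $W$ must contain at least one term of $V$. Deleting the identity terms of $W$ leaves a nonempty product-one subsequence of $V$, which is a contradiction. Hence $E(G)\geq\mathsf d(G)+|G|$.

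\emph{Upper bound: reduction to $H$.} Write $H=\langle y\rangle$ for the cyclic subgroup of index $p$, with $|H|=n$, so $|G|=pn$. By Lemma~\ref{lemma:normality}, $H\trianglelefteq G$, and $G/H\cong C_p$. I would first establish (or quote from the companion work) the value $\mathsf d(G)=n+p-2$. The lower bound comes from an explicit sequence $y^{[n-1]}\boldsymbol{\cdot}x^{[p-1]}$, suitably adjusted using the presentation of Lemma~\ref{lemma:grouppresentation}. Now let $S$ be a sequence of length $pn+n+p-2$, and let $\varphi:G\to G/H$ be the canonical map. Applying Lemma~\ref{lemma:someresultsrank2}(iii) in $C_p$ repeatedly, I peel off disjoint blocks $B_1,\dots,B_m$ with $|B_i|=p$ and $\varphi(B_i)$ product-one. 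This continues as long as at least $2p-1$ terms remain, so $m\geq n+\lfloor (n-p)/p\rfloor$. Lemma~\ref{lemma:quotientlifting} only applies to abelian quotients, but here $G/H$ is cyclic, so it does apply and gives $\pi(B_i)\subseteq H$. Choose $\beta_i\in\pi(B_i)$. Any $n$ of the $\beta_i$ with product $1$ in $H$ yields a product-one subsequence of $S$ of length $pn=|G|$ (concatenate the orderings), and we are done.

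\emph{Main obstacle.} We have only about $n+n/p-1$ block products, while the Erd\H{o}s--Ginzburg--Ziv theorem in $C_n$ needs $2n-1$. So plain counting does not suffice, and the main work is to exploit the freedom in the construction. I see three sources of freedom. First, we may re-choose the blocks. Second, we may choose different elements of $\pi(B_i)$; for noncommuting terms $\pi(B_i)$ is typically not a singleton, since conjugation by $x$ acts as $y\mapsto y^r$. Third, we may include terms from $H$ itself in blocks of varying length, as long as the total length is $pn$. I would therefore argue by contradiction, assuming $S$ has no product-one subsequence of length $|G|$.

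\emph{Structure of an extremal $S$.} Mimicking Lemma~\ref{lemma:blockwithstructure}, I would show the following in turn.
\begin{enumerate}[label=(\alph*)]
\item Each block product set is a singleton.
\item The $\beta_i$ take at most two values, in the pattern forced by Lemma~\ref{lemma:someresultsrank2}(iv).
\item By swapping terms between blocks, any two terms of $S$ outside $H$ commute.
\end{enumerate}
Then $\langle\supp(S)\rangle$ is abelian, or $S$ is concentrated on very few elements.

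In the abelian case, I would apply Lemma~\ref{lemma:gao>D+G-1} with $N=|G|$ inside the abelian subgroup $A=\langle\supp(S)\rangle$. This needs $\mathsf d(A)\leq n+p-2$ and $\exp(A)\mid |G|$; both should follow because $A$ is a proper abelian subgroup, hence of the form $C_{n'}$ or $C_{p}\oplus C_{n/p}$-like, with small Davenport constant. In the concentrated case, a direct count of multiplicities finishes the proof. The hardest step is (c), the commutation argument. I expect it to require a factorization lemma analogous to Lemma~\ref{lemma:pairfactorization} but over $C_p$ rather than $C_p\oplus C_p$. I would try proving that first, using Lemma~\ref{lemma:someresultsrank2}(iii) on the sequence with the two specified terms removed.
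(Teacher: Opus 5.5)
Your lower bound is correct; it is the construction behind Lemma~\ref{lemma:universallower} and the bound $s_{m\exp(G)}(G)\ge\mathsf d(G)+m\exp(G)$ of Section~4. The paper gives no proof of this lemma and simply imports it from the cited work. Your upper bound, however, has a genuine gap at step (b).

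Lemma~\ref{lemma:someresultsrank2}(iv) describes sequences of length \emph{exactly} $2n-2$ in $C_n$ with no $n$-term product-one subsequence. Your peeling only guarantees $m\ge n-1+\lceil n/p\rceil$ block products, which is $<2n-2$ as soon as $n\ge 4$; this is precisely the deficit you point out. Below that length there is no rigidity. For $n+1\le m\le 2n-3$, the sequence $1^{[n-1]}\boldsymbol{\cdot}y^{[m-n]}\boldsymbol{\cdot}y^{2}$ over $H$ has no $n$-term product-one subsequence, yet takes three values. In Lemma~\ref{lemma:blockwithstructure}, both the singleton property (a replaced value must coincide with the old one) and the commutation argument (which re-applies (i) to every refactorization) are consequences of the rigid shape $\alpha^{[q-1]}\boldsymbol{\cdot}\beta^{[q-1]}$. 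So your (a) and (c) have nothing to rest on.

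Even granting (c), it concerns only pairs of terms outside $H$. A term of $H$ need not commute with a term outside $H$, since $x^{-1}yx=y^r$. For $p=2$ such a pair can never lie in a common block of length $p$, so the exchange trick cannot supply this commutation. In addition, the up to $2p-2$ leftover terms lie in no block at all. Hence neither ``$\langle\supp(S)\rangle$ is abelian'' nor the unspecified ``concentrated case'' follows.

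For orientation, here is where the statement is easy and where it is not.
\begin{itemize}
\item If $G$ is abelian, the statement is just Lemma~\ref{lemma:gao>D+G-1} with $N=|G|$. Your value $\mathsf d(G)=n+p-2$ is wrong for cyclic $G$, but that case is trivial.
\item If $G$ is nonabelian with $\exp(G)=|G|/p$, the statement follows from the paper itself. Case~1 of the proof of Theorem~\ref{theorem:main} does not use this lemma and gives $s(G)=\mathsf d(G)+\exp(G)$. The inequality chain in Proposition~\ref{proposition:ifsg=d+exp} then forces $E(G)=\mathsf d(G)+|G|$. There the exact count $2q-2$ needed for the inverse theorem comes from working modulo $K=\langle y^p\rangle$, with $G/K\cong C_p\oplus C_p$, and aiming at length $n$ rather than $pn$.
\item The substantive case is $\exp(G)=|G|$, for example $D_{2n}$ with $n$ odd. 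Here $E(G)=s(G)$, and your block count over $G/H\cong C_p$ falls short of the Erd\H{o}s--Ginzburg--Ziv threshold by roughly $n-n/p$.
\end{itemize}
Closing that gap needs a genuinely different mechanism. One example is exploiting that terms outside $H$ have large product sets: $xy^a\boldsymbol{\cdot}xy^b$ in $D_{2n}$ already yields both $y^{b-a}$ and $y^{a-b}$. Your proposal names this ``freedom'' but does not supply such a mechanism. As written, it proves only $E(G)\ge\mathsf d(G)+|G|$.
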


\begin{lemma} (\cite{QuLiTeeuwsenJCTA} and \cite[Corollary 1.3]{QLT}) \label{Lemma:smalld(G)} Let $G$ be a finite non-cyclic group and $p$ the smallest prime divisor of $|G|$. Then $\mathsf d(G)\leq \frac{|G|}{p}+p-2$, and furthermore, the equality  $\mathsf d(G)=\frac{|G|}{p}+p-2$ holds if and only if
$G$ has a cyclic subgroup of index $p$.
\end{lemma}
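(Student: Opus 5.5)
The proof splits into a lower-bound construction, which gives the ``if'' direction of the equality statement, and an upper bound together with an inverse (structural) analysis, which gives the inequality and the ``only if'' direction. Since this lemma is imported from \cite{QuLiTeeuwsenJCTA,QLT}, I only sketch how I would reconstruct it.

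\emph{Lower bound (the ``if'' part).} Suppose $H=\langle y\rangle$ is cyclic of index $p$, and put $n=|H|=|G|/p$. By Lemma \ref{lemma:normality}, $H\trianglelefteq G$, so $G/H\cong C_p$. Pick $x\notin H$ and set
$$S=y^{[n-1]}\boldsymbol{\cdot}x^{[p-1]},$$
which has length $n+p-2$. Let $T\mid S$ be nonempty, and let $j$ be the number of copies of $x$ in $T$.
\begin{itemize}
\item If $1\le j\le p-1$, every element of $\pi(T)$ lies in the coset $x^jH\neq H$, so it is not $1$.
\item If $j=0$, then $\pi(T)=\{y^i\}$ with $1\le i\le n-1$, and $y^i\neq 1$.
\end{itemize}
Hence $S$ is product-one free and $\mathsf d(G)\ge |G|/p+p-2$.

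\emph{Upper bound.} Let $G$ be non-cyclic and let $S$ be product-one free of length $\ell$. The goal is $\ell\le |G|/p+p-2$. I would use an Olson-type growth argument on ordered partial product sets. Order the terms of $S$ greedily so that the set $P_k$ of products of subsequences formed from the first $k$ terms grows at each step. Because $1\notin P_k$, adjoining a new term $g$ gives
$$P_{k+1}\supseteq P_k\cup P_kg\cup\{g\}.$$
Then:
\begin{itemize}
\item Inside a cyclic subgroup $\langle g\rangle$, each step adds at least one new element.
\item If $\supp(S)$ is not contained in a single coset chain of a cyclic subgroup, I would compare the terms falling in a largest cyclic subgroup $C$ with those outside it. Terms in $C$ contribute about $|C|-1$ elements in total. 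Each term taking the product into a new coset contributes at least one new coset's worth of structure.
\item Non-cyclicity forces every cyclic subgroup to satisfy $|C|\le |G|/p$.
\end{itemize}
Counting both kinds of growth against $|G\setminus\{1\}|$ and against the coset count should give
$$\ell\le (|C|-1)+([G:C]\text{-type term})-1\le |G|/p+p-2.$$
The inequality $a+b\le ab/p+p$ for divisors $a,b\ge p$ with $ab=|G|$ is what converts the mixed contribution into this bound.

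\emph{Inverse direction and main obstacle.} If equality holds, every inequality in the counting must be tight. I expect tightness to force $|C|=|G|/p$ for the cyclic subgroup $C$ carrying the long part of $S$, so that $C$ is a cyclic subgroup of index $p$; this gives the ``only if'' part.

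The main obstacle is making the growth argument genuinely noncommutative: the sets $P_kg$ and $gP_k$ differ, and the greedy ordering must be chosen so that the increments are controlled. My first attempt would be to reduce along a normal subgroup. I would choose $N\trianglelefteq G$ with $G/N$ of prime order (or $N=C$ normal whenever $C$ has index $p$) and apply the multiplicativity estimate
$$\mathsf d(G)+1\le(\mathsf d(N)+1)+\mathsf d(G/N).$$
This estimate comes from repeatedly extracting subsequences whose images are product-one in $G/N$ and concatenating the resulting elements of $N$. I would then induct on $|G|$, handling separately the base cases where $N$ is cyclic and those where $G/N$ is abelian of rank two.
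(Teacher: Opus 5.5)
Your proposal has a genuine gap: it establishes the lower bound, but neither the upper bound nor the ``only if'' direction. The paper gives no proof of this lemma (it is quoted from the work of Qu, Li and Teeuwsen), so your sketch has to stand on its own.

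The construction is correct. The sequence $y^{[n-1]}\boldsymbol{\cdot}x^{[p-1]}$ is product-one free; it is the same sequence $W$ used in Case~1 of the proof of Theorem~\ref{theorem:main}. It gives $\mathsf d(G)\ge |G|/p+p-2$ when a cyclic subgroup of index $p$ exists. Note, though, that even the ``if'' direction of the equality needs the upper bound, not only this construction.

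The upper bound itself is not proved. The one concrete step in your growth argument is that $|P_{k+1}|\ge|P_k|+1$ whenever $1\notin P_{k+1}$. That yields only the trivial bound $\mathsf d(G)\le|G|-1$. The step that should save a factor of roughly $p$ is never formulated: the ``new coset's worth of structure'' and the unspecified ``$[G:C]$-type term'' remain slogans. That step is exactly where non-cyclicity and noncommutativity must be used. Because no actual chain of inequalities is produced, the ``only if'' part (``tightness forces $|C|=|G|/p$'') has nothing to rest on.

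The fallback reduction fails outright. The estimate $\mathsf d(G)+1\le(\mathsf d(N)+1)+\mathsf d(G/N)$ is false.
\begin{itemize}
\item For $N=C_2\le G=C_4$ it reads $4\le 3$.
\item For the non-cyclic group $G=D_8=\langle r,s\rangle$ with $\ord(r)=4$, $\ord(s)=2$, and $N=\{1,r^2,s,r^2s\}$, it gives $\mathsf d(D_8)\le 3$. Yet $r^{[3]}\boldsymbol{\cdot}s$ is product-one free.
\end{itemize}
The extraction-and-concatenation argument you describe yields only the multiplicative bound $\mathsf d(G)+1\le(\mathsf d(N)+1)(\mathsf d(G/N)+1)$, because each extracted block may have up to $\mathsf d(G/N)+1$ terms. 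For $N$ cyclic of prime index this is again just $\mathsf d(G)\le|G|-1$.

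When $N$ is a cyclic subgroup of index $p$, your additive inequality is literally the claimed bound $\mathsf d(G)\le|G|/p+p-2$. Invoking it there is therefore circular.

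Finally, the induction presupposes a normal subgroup of prime index. Perfect groups such as $A_5$ have none, yet the lemma covers them (with $p=2$).

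What is missing is a genuine upper-bound argument, valid for all finite non-cyclic groups, that actually exploits non-cyclicity. That is the substance of the cited papers, and your proposal does not supply a replacement for it.
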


Now we are in a position to show Theorem \ref{theorem:main}.

\bigskip

\noindent{\bf Proof of Theorem \ref{theorem:main}.}  By Lemma \ref{lemma:normality}, $H$ is normal in $G$. Denote $|H|=n$. By Lemma~\ref{lemma:grouppresentation}, $\exp(G)\in \{n,pn\}$. Hence,
we shall distinguish two cases according to the value of $\exp(G)$.

\medskip

\noindent \textbf{Case 1. $\exp(G)=n$.}

Since $G$ always has an element of order $p$, it follows from the definition that $$p\mid n.$$
Choose the presentation for $G$
by Lemma \ref{lemma:grouppresentation}, put $q=n/p$, and let
$K=\langle y^p\rangle$.  Since $G$ is nonabelian, we have $q\geq 2$, otherwise $|G|=p^2$, which
would imply that $G$ is abelian. Lemma \ref{lemma:ranktwoquotient} gives
$C_q\cong K\unlhd G$ and
$G/K\cong C_p\oplus C_p.$ Let $\varphi:G\to G/K$ be the canonical epimorphism.
Notice that
$W=y^{[n-1]}\boldsymbol{\cdot}x^{[p-1]}$
is a product-one free sequence over $G$. This gives
\begin{equation}\label{equation eta(G)geq}
\eta(G)\geq \mathsf d(G)+1\geq |W|+1=n+p-1.
\end{equation}
We shall prove
\begin{equation}\label{equuation proves(G)=}
s(G)\leq2n+p-2.
\end{equation}
Assume to the contrary that there exists a sequence $S\in \mathcal{F}(G)$ of length $2n+p-2$ such that $S$ has no product-one subsequence of length $n$.
Let $V_1,\ldots,V_r$ be mutually disjoint subsequences of $S$ with $r$ being {\bf maximal}, such that $|V_1|=\cdots=|V_r|=p$, and $\varphi(V_1),\ldots,\varphi(V_r)$ are product-one over $G/K$. Denote $R=S\boldsymbol{\cdot} (V_1\boldsymbol{\cdot}\ldots\boldsymbol{\cdot} V_r)^{[-1]}.$
By the maximality of $r$, we conclude that $\varphi(R)$ contains no product-one subsequence of length $p$.
Lemma \ref{lemma:someresultsrank2} (i) therefore gives
$|R|\leq4p-4.$
This implies that $$r=\frac{|S|-|R|}{p}\geq  \left\lceil\frac{(2n+p-2)-(4p-4)}{p}\right\rceil=2q-2.$$
Fix an element $\alpha_i\in\pi(V_i)\subseteq K$ for each $i\in [1,r]$.  If
$r\geq2q-1$, Lemma \ref{lemma:someresultsrank2} (iii) gives $q$ of
the $\alpha_i$ whose product is $1$.  Ordering and concatenating the
corresponding subsequences gives a product-one subsequence of $S$ of length
$pq=n$, a contradiction.  Hence,
$r=2q-2.$
It follows that
$|R|=|S|-|V_1\boldsymbol{\cdot} \ldots\boldsymbol{\cdot} V_{r}|=(2pq+p-2)-(2q-2)p=3p-2.$
By Lemma \ref{lemma:someresultsrank2} (ii) and the maximality of $r$, we derive that $R$ contains a subsequence $U$ of length $2p$ such that $\varphi(U)$ is product-one over $G/K$.
Let
$$T=V_1\boldsymbol{\cdot}\ldots
\boldsymbol{\cdot}V_{2q-2}\boldsymbol{\cdot}U.$$
Then $|T|=2pq=2n$.  The sequence $T$ has no $n$-term product-one
subsequence because $T\mid S$.  Lemma \ref{lemma:blockwithstructure} shows
that $T$ is product-one and that $A=\langle\supp(T)\rangle$
is abelian. Since $G$ is nonabelian, and $p$ is the smallest prime divisor of $|G|$, it follows that $|A|\leq \frac{|G|}{p}=n$, and
$\exp(A)\mid\exp(G)=n$.  Apply Lemma \ref{lemma:gao>D+G-1} with
$N=n$.  Since $|T|=2n\geq n+|A|\geq  n+\mathsf d(A)$, it follows that $T$ has an $n$-term product-one subsequence. This
contradicts the choice of $S$, and so proves \eqref{equuation proves(G)=}.

By \eqref{equation eta(G)geq} and Lemma \ref{lemma:universallower}, we see that $s(G)\geq\eta(G)+n-1\geq2n+p-2.$
Combined with \eqref{equuation proves(G)=}, we conclude that $s(G)=\eta(G)+n-1=2n+p-2$ and $\eta(G)=\mathsf d(G)+1$.

\medskip

\noindent \textbf{Case 2. $\exp(G)=pn$.}

By definition, we derive that
$s(G)=E(G)$ and $\eta(G)\geq \mathsf d(G)+1$. It follows from Lemma \ref{lemma:E(G)=d+|G|} that $s(G)=E(G)=\mathsf d(G)+|G|\leq (\eta(G)-1)+\exp(G)$. Combined with Lemma \ref{lemma:universallower}, we have $s(G)=\eta(G)+\exp(G)-1$, and moreover, $\eta(G)-1=\mathsf d(G)$. Combined with Lemma \ref{Lemma:smalld(G)}, we derive that
$s(G)=|G|+\frac{|G|}{p}+p-2$. \qed

\section{Further consequences and problems}

We first remark that the equality $s(G)=\mathsf d(G)+\exp(G)$ in Theorem \ref{theorem:main} does not hold for all finite groups. For example, take a finite abelian group $G=C_n\oplus C_n$. It is known that $s(G)=4n-3=(3n-2)+n-1=\eta(G)+\exp(G)-1>(2n-2)+n=\mathsf d(G)+\exp(G)$. In the following, we shall employ a nonabelian finite group $G$ investigated in  \cite{BLMR}, for which the equality $s(G)=\eta(G)+\exp(G)-1$ holds, but $s(G)>\mathsf d(G)+\exp(G)$.

\begin{example} Let $n\geq 4$ be an even number, and let $G=D_{2n}\times C_2$ where $D_{2n}$ is the dihedral group of order $2n$. By Theorem 3.2 and Theorem 4.2 in \cite{BLMR}, we see $\exp(G)=n$, $\mathsf d(G)=n+1$, $\eta(G)=n+3$ and $s(G)=2n+2$, and therefore, $s(G)=\eta(G)+\exp(G)-1>\mathsf d(G)+\exp(G)$.
\end{example}

Combined with Theorem \ref{theorem:main}, the following problem arises naturally.

\begin{problem}\label{problem1}  Determine all finite groups such that $s(G)=\mathsf d(G)+\exp(G)$.
\end{problem}

In the following, we shall notice that for a {\bf generalized} EGZ invariant (including the Gao constant $E(G)$), the key is the relation with the small Davenport constant $\mathsf d(G)$ instead of the relation with short product-one invariant  $\eta(G)$.
To learn the mechanism underlying this, we need to introduce the generalized EGZ constant and the stabilization threshold (an associated existence invariant) in the setting of finite groups. Although these invariants were originally defined for finite
abelian groups in \cite[Definition 2.1]{GaoGeroldingersurvey} and \cite[Definition 3.5]{GaoRestrictedII}, their
definitions extend without change to arbitrary finite groups when zero-sum
subsequences are replaced by product-one subsequences.

\begin{definition} Let $G$ be a finite group and let $m$ be a positive integer. We denote
by $s_{m\exp(G)}(G)$ the least positive integer $t$ such that every
sequence over $G$ of length at least $t$ contains a product-one
subsequence of length exactly $m\exp(G)$. In particular,
$$s_{\exp(G)}(G)=s(G).$$
We also denote by $s_{\exp(G)\mathbb N}(G)$ the least positive integer
$t$ such that every sequence over $G$ of length at least $t$ contains
a nonempty product-one subsequence whose length is divisible by
$\exp(G)$.

The \emph{stabilization threshold} of $G$ is
$$\ell(G)=\inf\bigl\{t\in\mathbb N:
s_{m\exp(G)}(G)=\mathsf d(G)+m\exp(G)
\text{ for every }m\geq t\bigr\},$$
where the infimum of the empty set is understood to be infinity.
\end{definition}

The invariant $\ell(G)$ for finite abelian groups $G$ already appeared in
\cite[Definition 3.5]{GaoRestrictedII}; it was subsequently studied
systematically by Gao and Thangadurai \cite{GaoThangadurai}.
For a finite abelian group $G$, Gao's prescribed-length zero-sum
theorem, stated as Lemma \ref{lemma:gao>D+G-1}, can be reformulated as
$$\ell(G)\leq\frac{|G|}{\exp(G)}.$$
In fact, $\ell(G)$ provides a natural framework for studying the
relationships among the generalized EGZ constants $s_{m\exp(G)}(G)$, the Gao constant $E(G)$,
the small Davenport constant $\mathsf d(G)$, and the exponent $\exp(G)$ of $G$. The following
proposition makes these relationships explicit.

\begin{proposition}\label{proposition:ifsg=d+exp}
Let $G$ be a finite group.  Then $$0\leq E(G)-\bigl(\mathsf d(G)+|G|\bigr)
\leq s(G)-\bigl(\mathsf d(G)+\exp(G)\bigr),$$
and $$s(G)=\mathsf d(G)+\exp(G) \quad \text{if and only if} \quad \ell(G)=1.$$ Moreover, if $\ell(G)=1$ then the following assertions hold.
\begin{enumerate}[label=\rm(\roman*)]
\item For every positive integer $m$,
$s_{m\exp(G)}(G)=\eta(G)+m\exp(G)-1;$
\item $s_{\exp(G)\mathbb N}(G)
=s(G)
=\eta(G)+\exp(G)-1.$
\end{enumerate}
\end{proposition}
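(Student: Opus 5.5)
The plan is to compare everything with the single extremal construction behind Lemma~\ref{lemma:universallower}, together with a concatenation induction on $m$. Write $e=\exp(G)$ and $k=s(G)-(\mathsf d(G)+e)$; note $k\geq 0$ by Lemma~\ref{lemma:universallower}.

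\emph{Lower bound.} Take a product-one free sequence $V$ of length $\mathsf d(G)$ and set $T=V\boldsymbol{\cdot}1^{[me-1]}$. Suppose $W\boldsymbol{\cdot}1^{[j]}$ is a nonempty product-one subsequence of $T$ with $W\mid V$. Deleting the identity terms from a product-one ordering leaves a product-one ordering of $W$. Since $V$ is product-one free, $W$ must be empty, so the subsequence has length at most $me-1$. This gives
$$s_{me}(G)\geq \mathsf d(G)+me \quad\text{for every } m\geq 1.$$
Taking $m=1$ in the same construction, the subsequence length $j\leq e-1$ is never a positive multiple of $e$, so also $s_{e\mathbb N}(G)\geq \mathsf d(G)+e$.

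\emph{Upper bound.} I will prove by induction on $m$ that $s_{me}(G)\leq \mathsf d(G)+me+k$. The case $m=1$ is the definition of $k$. Now let $m\geq 2$ and take $S$ of length at least $\mathsf d(G)+me+k$. Since $|S|\geq \mathsf d(G)+e+k=s(G)$, $S$ contains a product-one subsequence $T_1$ of length $e$. The remainder $S\boldsymbol{\cdot}T_1^{[-1]}$ has length at least $\mathsf d(G)+(m-1)e+k$, so by induction it contains a product-one subsequence $T_2$ of length $(m-1)e$. Concatenating product-one orderings of $T_1$ and $T_2$ gives a product-one subsequence of $S$ of length $me$.

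\emph{Consequences for $E(G)$ and $\ell(G)$.} Since $e\mid |G|$, taking $m=|G|/e$ yields
$$0\leq E(G)-(\mathsf d(G)+|G|)\leq k,$$
which is the first displayed claim. If $s(G)=\mathsf d(G)+e$, then $k=0$, so the lower and upper bounds coincide for all $m$ and $\ell(G)=1$. Conversely, $\ell(G)=1$ is by definition the case $m=1$, namely $s(G)=\mathsf d(G)+e$.

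\emph{Assertions (i) and (ii).} Assume $\ell(G)=1$. Lemma~\ref{lemma:universallower} gives $\mathsf d(G)+e=s(G)\geq \eta(G)+e-1$, so $\eta(G)\leq \mathsf d(G)+1$. Together with $\eta(G)\geq\mathsf d(G)+1$ this forces $\eta(G)=\mathsf d(G)+1$, and then
$$s_{me}(G)=\mathsf d(G)+me=\eta(G)+me-1,$$
which is (i). For (ii), a product-one subsequence of length exactly $e$ qualifies in the definition of $s_{e\mathbb N}(G)$, so $s_{e\mathbb N}(G)\leq s(G)$. Combined with the lower bound $s_{e\mathbb N}(G)\geq \mathsf d(G)+e=s(G)$ from the first step, we get equality.

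The only genuinely delicate point is checking that the extremal construction has no product-one subsequence of the relevant lengths in the nonabelian setting. This is handled by the observation that identity terms can always be removed from a product-one ordering, so nonabelianness causes no difficulty there. Everything else is bookkeeping.
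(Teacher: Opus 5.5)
Your proof is correct and follows essentially the same route as the paper. The lower bound comes from the extremal sequence $U\boldsymbol{\cdot}1^{[m\exp(G)-1]}$, and the upper bound $s_{m\exp(G)}(G)\leq s(G)+(m-1)\exp(G)$ (your $\mathsf d(G)+me+k$ is exactly this) comes from repeatedly extracting $\exp(G)$-term product-one subsequences; you then specialize to $m=|G|/\exp(G)$ and derive (i) and (ii) just as the paper does, and your remark that identity terms can be deleted from a product-one ordering spells out a step the paper leaves implicit.
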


\begin{proof}
Let $U\in \mathcal F(G)$ be a product-one free sequence of length $\mathsf d(G)$.

We first prove that, for every
positive integer $m$,
\begin{equation}\label{equation:prescribedbounds}
\mathsf d(G)+m\exp(G)\leq s_{m\exp(G)}(G)\leq s(G)+(m-1)\exp(G).
\end{equation}
 Notice that the
sequence
$U\boldsymbol{\cdot}1^{[m\exp(G)-1]}$
has no product-one subsequence of length $m\exp(G)$. This implies
$s_{m\exp(G)}(G)\geq |U\boldsymbol{\cdot}1^{[m\exp(G)-1]}|+1=\mathsf d(G)+m\exp(G)$.
For the second inequality, let $S$ be a sequence of length
$s(G)+(m-1)\exp(G)$. Applying the definition of $s(G)$ successively, we obtain $m$
mutually disjoint product-one subsequences $T_1,\ldots,T_m$ of $S$, each of length $\exp(G)$.
Then $T_1\boldsymbol{\cdot}\ldots\boldsymbol{\cdot} T_m$ is a product-one subsequence of $S$ of length $m\exp(G)$. This proves \eqref{equation:prescribedbounds}.

Taking
$m=\frac{|G|}{\exp(G)}$ in \eqref{equation:prescribedbounds} and using
$E(G)=s_{m\exp(G)}(G)$, we obtain
$\mathsf d(G)+|G|\leq E(G)\leq s(G)+|G|-\exp(G).$
Subtracting $\mathsf d(G)+|G|$ yields
$$0\leq E(G)-\bigl(\mathsf d(G)+|G|\bigr)
\leq s(G)-(\mathsf d(G)+\exp(G)).$$

We now prove that
$$s(G)=\mathsf d(G)+\exp(G)
\quad \text{if and only if}  \quad
\ell(G)=1.$$  If $\ell(G)=1$, then taking $m=1$ in the definition immediately gives
$s(G)=\mathsf d(G)+\exp(G)$.
Hence, we next assume
$s(G)=\mathsf d(G)+\exp(G)$  and prove $\ell(G)=1$. By \eqref{equation:prescribedbounds}, we derive that for every positive integer $m$, $\mathsf d(G)+m\exp(G)\leq s_{m\exp(G)}(G)\leq s(G)+(m-1)\exp(G)=(\mathsf d(G)+\exp(G))+(m-1)\exp(G)=\mathsf d(G)+m\exp(G)$ and so $s_{m\exp(G)}(G)=\mathsf d(G)+m\exp(G)$. Therefore, $\ell(G)=1$, as required.

Suppose now that $\ell(G)=1$, i.e.,
$s(G)=\mathsf d(G)+\exp(G)$. By
Lemma \ref{lemma:universallower}, we have
\begin{equation}\label{equation:eta=d+1}
\eta(G)=\mathsf d(G)+1
\qquad\text{and}\qquad
s(G)=\eta(G)+\exp(G)-1.
\end{equation}
Combined with the definition of $\ell(G)$,  we have that $s_{m\exp(G)}(G)=\mathsf d(G)+m \exp(G)=\eta(G)+m \exp(G)-1$ for every $m\geq 1$,
which proves \rm(i).

By definition,
$s_{\exp(G)\mathbb N}(G)\leq s(G)$.
On the other hand, the sequence $U\boldsymbol{\cdot}1^{[\exp(G)-1]}$
contains no product-one subsequence whose length is a positive
multiple of $\exp(G)$. Consequently,
$s_{\exp(G)\mathbb N}(G)\geq |U\boldsymbol{\cdot}1^{[\exp(G)-1]}|+1=\mathsf d(G)+\exp(G)=s(G).$
Together with \eqref{equation:eta=d+1}, this gives
$s_{\exp(G)\mathbb N}(G)
=s(G)
=\eta(G)+\exp(G)-1,$
and proves \rm(ii).
\end{proof}

By Proposition \ref{proposition:ifsg=d+exp} and Theorem \ref{theorem:main}, we can derive the following immediately.

\begin{corollary}\label{corollary:generalized}
Let $G$ be a finite nonabelian group, and let $p$ be the smallest prime
divisor of $|G|$. Suppose that $G$ has a cyclic subgroup $H$ of index $p$.
Then $s_{m\exp(G)}(G)=\eta(G)+m\exp(G)-1$ for every positive integer $m$,
and $s_{\exp(G)\mathbb N}(G)=s(G)=\eta(G)+\exp(G)-1=\mathsf d(G)+\exp(G).$ In particular,
$E(G)=\mathsf d(G)+|G|.$
\end{corollary}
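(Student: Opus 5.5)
The corollary follows by feeding the conclusion of Theorem \ref{theorem:main} into Proposition \ref{proposition:ifsg=d+exp}. The plan has three short steps.

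\emph{Step 1.} For $G$ as in the statement, Theorem \ref{theorem:main} gives
\[
s(G)=\eta(G)+\exp(G)-1=\mathsf d(G)+\exp(G).
\]
The equality $s(G)=\mathsf d(G)+\exp(G)$ is exactly the hypothesis in the equivalence of Proposition \ref{proposition:ifsg=d+exp}. So the first step concludes $\ell(G)=1$.

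\emph{Step 2.} With $\ell(G)=1$, assertions (i) and (ii) of Proposition \ref{proposition:ifsg=d+exp} apply directly. Assertion (i) gives $s_{m\exp(G)}(G)=\eta(G)+m\exp(G)-1$ for every $m\geq1$. Assertion (ii) gives $s_{\exp(G)\mathbb N}(G)=s(G)=\eta(G)+\exp(G)-1$. Appending the equality $s(G)=\mathsf d(G)+\exp(G)$ from Theorem \ref{theorem:main} yields the displayed chain in the corollary.

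\emph{Step 3.} For the particular statement $E(G)=\mathsf d(G)+|G|$, the cleanest route is the sandwich inequality in Proposition \ref{proposition:ifsg=d+exp}:
\[
0\leq E(G)-\bigl(\mathsf d(G)+|G|\bigr)\leq s(G)-\bigl(\mathsf d(G)+\exp(G)\bigr)=0.
\]
Equivalently, since $\exp(G)\mid|G|$, one may take $m=|G|/\exp(G)$ in the definition of $\ell(G)$. This uses $s_{|G|}(G)=E(G)$ and gives $E(G)=\mathsf d(G)+m\exp(G)=\mathsf d(G)+|G|$. This recovers Lemma \ref{lemma:E(G)=d+|G|} rather than depending on it.

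I expect no step to be a real obstacle, since everything reduces to bookkeeping with earlier results. The only point needing care is the direction of the logic: the corollary must invoke the equivalence $s(G)=\mathsf d(G)+\exp(G)\iff\ell(G)=1$ in the direction that deduces $\ell(G)=1$. It must not presuppose $\ell(G)=1$.
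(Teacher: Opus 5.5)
Your proof is correct and follows the paper's route exactly: Theorem~\ref{theorem:main} gives $s(G)=\mathsf d(G)+\exp(G)$, so $\ell(G)=1$ by Proposition~\ref{proposition:ifsg=d+exp}, and that proposition's assertions (i) and (ii) together with its sandwich inequality yield every claim in the corollary. One correction to your closing remark: the argument does not recover Lemma~\ref{lemma:E(G)=d+|G|} independently, because Case~2 of the proof of Theorem~\ref{theorem:main} (where $\exp(G)=|G|$) invokes that lemma directly, as the paper itself points out; the derivation is independent of the lemma only for the groups in Case~1.
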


Notice that Corollary \ref{corollary:generalized} does not provide an
independent proof of Lemma \ref{lemma:E(G)=d+|G|}, since Lemma \ref{lemma:E(G)=d+|G|} is used in Case 2 of
the proof of Theorem \ref{theorem:main}.

By Proposition \ref{proposition:ifsg=d+exp}, Problem \ref{problem1}
is equivalent to characterizing the finite groups $G$ for which
$\ell(G)=1$. For finite abelian groups, this occurs precisely when
$G$ is cyclic (as also noted in \cite{GaoHanPengSun}). The nonabelian situation is considerably richer:
Theorem \ref{theorem:main} shows that $\ell(G)=1$ whenever $G$ has a
cyclic subgroup whose index is the smallest prime divisor of $|G|$.

Moreover, Gao et al. \cite[Conjecture 4.7]{GaoHanPengSun} also conjectured
that
$$
\ell(G)=\left\lceil\frac{\mathsf D(G)}{\exp(G)}\right\rceil
$$
for every finite abelian group $G$. The conjecture remains open in general. The following example shows that the proposed
formula does not extend verbatim to finite nonabelian groups.

\begin{example}\label{example:nonabelianell}
Let $n\geq4$ be even and let $G=D_{2n}$. By applying Corollary \ref{corollary:generalized} with $p=2$,  we derive that $s(G)=\mathsf d(G)+\exp(G)$, i.e.,
$\ell(G)=1.$ On the other hand, we see that
$\exp(G)=n$, $\mathsf d(G)=n.$ Moreover, by \cite[Theorem 1.1]{GG2013}, we have
$\mathsf D(G)=\mathsf d(G)+|G'|=n+\frac{n}{2}=\frac{3n}{2}.$ Therefore,
$$\left\lceil\frac{\mathsf d(G)+1}{\exp(G)}\right\rceil=\left\lceil\frac{\mathsf D(G)}{\exp(G)}\right\rceil
=2\neq\ell(G).$$
\end{example}

The preceding discussion leads to the following problem
concerning the stabilization threshold of a finite group.

\begin{problem}
Is $\ell(G)<\infty$ for every finite group $G$? Furthermore, establish
general upper bounds for $\ell(G)$, and determine its exact value for
natural families of finite nonabelian groups, whenever $\ell(G)$ is finite.
\end{problem}

\bigskip

\noindent {\bf Acknowledgments}

\noindent This work was supported in part by the National Natural Science Foundation of China (NSFC) (No. 12371335), by the Henan Provincial Selective Research Funding Program for Returned Scholars Studying Abroad (No. HNLX202611), and it was also supported in part by a Discovery Grant from the Natural Sciences and Engineering Research Council of Canada (No. RGPIN 2024-03931).

\section*{Author information}

\noindent\textbf{Yongke Qu}\\
Department of Mathematics, Luoyang Normal University,
Luoyang 471934, P.R. China\\
E-mail: \href{mailto:yongke1239@163.com}{yongke1239@163.com}

\medskip

\noindent\textbf{Guoqing Wang}\\
School of Mathematical Sciences, Tiangong University,
Tianjin 300387, P.R. China\\
E-mail: \href{mailto:gqwang1979@aliyun.com}{gqwang1979@aliyun.com}

\medskip

\noindent\textbf{Yuanlin Li}\\
Department of Mathematics and Statistics, Brock University,
St. Catharines, ON L2S 3A1, Canada\\
E-mail: \href{mailto:yli@brocku.ca}{yli@brocku.ca}

\end{document}